\def\CC {{\mathbb C}}     
\def\NN {{\mathbb N}}     
\def\PP {{\mathbb P}}     
\def\QQ {{\mathbb Q}}     
\def\RR {{\mathbb R}}     
\def\ZZ {{\mathbb Z}}     
\def\tst {\Longleftrightarrow}
\def\ul  {\underline}
\newtheorem{theorem}{Theorem}[section]
\newtheorem{lemma}[theorem]{Lemma}
\newtheorem{coro}[theorem]{Corollary}
\newtheorem{rem}{Remark}[section]
\newtheorem{example}{Example}[section]
\begin{document}

\title{Partial convex hulls of coadjoint orbits\\ and degrees of invariants}

\author{Valdemar V. Tsanov \footnote{Work supported by DFG grant SFB-TR191, ``Symplectic Structures in Geometry, Algebra and Dynamics'' and the Bulgarian Ministry of Education and Science, Scientific Programme ``Enhancing the Research Capacity in Mathematical Sciences (PIKOM)'', No. DO1-67/05.05.2022.}}

\maketitle

\abstract{We study properties of convex hulls of (co)adjoint orbits of compact groups, with applications to invariant theory and tensor product decompositions. The notion of partial convex hulls is introduced and applied to define two numerical invariants of a coadjoint orbit of a semisimple connected compact Lie group. It is shown that the orbits, where any one of these invariants does not exceed a given number $r$, form, upon intersection with a fixed Weyl chamber, a rational convex polyhedral cone in that chamber, related to the Littlewood-Richardson cone of the $r$-fold diagonal embedding of $K$. The numerical invariants are shown to provide lower bounds for degrees of invariant polynomials on irreducible unitary representations.}

\section{Introduction and main results}

Let $K$ be a semisimple connected compact Lie group with Lie algebra $\mathfrak k$. The negative of The Killing form of $\mathfrak k$ is positive definite, denoted by $(|)$; it provides an isomorphism $\mathfrak k\cong \mathfrak k^*$. The (co)adjoint $K$-orbits in $\mathfrak k$ are parametrized by any fixed Weyl chamber $\mathfrak t_+$ in a maximal abelian subalgebra $\mathfrak t\subset \mathfrak k$. For $\lambda\in\mathfrak t_+$, the convex hull ${\rm Conv}(K\lambda)$ is also called the orbitope of $\lambda$; it was studied by Biliotti, Ghigi and Heinzner in \cite{Biliotti-Ghigi-Heinzner}, where all faces of this orbitope are described and shown to be exposed. In this article, we introduce the notion of partial convex hulls and derive two numerical invariants ${\bf r}_0(\lambda)$ and ${\bf r}(\lambda)$. We show that the locus where any one of these invariants does not exceed a given $r\in\NN$ forms a rational polyhedral convex cone in $\mathfrak t_+$ related to the Littlewood-Richardson cone of the $r$-fold diagonal embedding $K\hookrightarrow K^{\times r}$. The consideration of ${\bf r}_0(\lambda)$ and ${\bf r}(\lambda)$ is motivated by interpretations in terms of momentum maps and resulting applications to invariant theory and decompositions of tensor products of irreducible representations. The main results are formulated after the introduction of some notions and notation.\\

The $r$-th partial convex hull of a subset $X\subset E$ of a Euclidean space $E$, for a positive integer $r$, is defined as
\begin{gather}\label{For CrX}
C_r(X) := \bigcup\limits_{x_1,...,x_r\in X} {\rm Conv}\{x_1,...,x_r\} \;.
\end{gather}
We focus on $X=K\lambda\subset \mathfrak k$, a (co)adjoint $K$-orbit through an arbitrary $\lambda\in \mathfrak t_+$. In such a case, $C_r(K\lambda)$ is preserved by $K$ and is equal to the convex hull ${\rm Conv}(K\lambda)$ for sufficiently large $r$, since (see Lemma \ref{Lemma ConvX is Cellplus1})
$$
{\rm Conv}(K\lambda) = C_{\ell+1}(K\lambda) \;, \quad {\rm where}\quad \ell:=\dim \mathfrak t \;.
$$
The minimal $r$ for which $C_r(K\lambda)$ is convex will be denoted by
$$
{\bf r}(\lambda) := \min\{r\in\NN: {\rm Conv}(K\lambda) = C_r(K\lambda)\} \;.
$$
Since $K$ is semisimple, $0\in{\rm Conv}(K\lambda)$ for all $\lambda$, and we define
$$
{\bf r}_0(\lambda) := \min\{r\in\NN: 0\in C_r(K\lambda) \} \;.
$$


Let $T={\rm exp}(\mathfrak t)\subset K$ be the maximal torus in $K$ corresponding to $\mathfrak t$. The character lattice ${\rm Hom}(T,\CC^\times)$ is naturally embedded in $i\mathfrak t^*$; we denote by $\Lambda\subset \mathfrak t$ its image under $\lambda\mapsto-i\lambda$ composed with the identification $\mathfrak t^*\cong \mathfrak t$ provided by $(|)$, and refer to $\Lambda$ as the weight lattice of $T$. Let $\Lambda^+=\Lambda\cap \mathfrak t_+$ be the monoid of dominant weights. Then $\Lambda^+$ parametrizes the irreducible complex representations of $K$, and we denote by $V_\lambda$ the irreducible representation with highest weight $i(\lambda|\cdot)$ with $\lambda\in\Lambda^+$.

Let $\CC[V_\lambda]$ be the polynomial ring on $V_\lambda$ and $\CC[V_\lambda]_d$ denote the space of homogeneous polynomals of degree $d$. The ring of $K$-invariant polynomials $\CC[V_\lambda]^K$ is finitely generated, by Hilbert's theorem, and the ordered sequence of the degrees $0=d_0(\lambda)\leq ...\leq d_p(\lambda)$ of a minimal set of homogeneous generators is the same for all such sets and thus canonically determined by $\lambda\in\Lambda^+$. Hilbert's theorem is famously nonconstructive and the degrees $d_{j}(\lambda)$ are generally unknown, although many special cases have subjected to extensive studies and classifications. Upper bounds for the Noether number $d_p(\lambda)$ have been derived by Popov \cite{Popov-Bounds} and Derksen \cite{Derksen-PolyBound}. Here we study lower bounds for the minimal positive degree $d_1(\lambda)=\min\{d\in\ZZ_{>0}:\CC[V_\lambda]_d^K\ne 0\}$, whenever it exists, and the variations of this degree and our bounds along variations of $\lambda$ in $\Lambda^+$. The bounds are related to the convex geometry of the orbit $K\lambda\subset \mathfrak k$.

The Littlewood-Richardson monoid $\mathcal{LR}_r$ and cone $\mathcal{CLR}_r$ of the $r$-fold diagonal embedding $\varphi_r:K\hookrightarrow K^{\times r}$ are defined as
\begin{gather}\label{For LR i CLR}
\begin{array}{rl}
\mathcal{LR}_r &=\mathcal{LR}_r(K) = \{(\lambda_1,...,\lambda_r)\in(\Lambda^+)^{\times r}: (V_{\lambda_1}\otimes...\otimes V_{\lambda_r})^K\ne 0 \} \\
\mathcal{CLR}_r &=\mathcal{CLR}_r(K) = {\rm Conv}(\mathcal{LR}_r)\\
& = \{(\lambda_1,...,\lambda_r)\in(\mathfrak t_+)^{\times r}:0\in K\lambda_1+...+K\lambda_r\}\;.
\end{array}
\end{gather}
It is well known, see \cite{Brion-Survey} for a survey of the key results and historical references, that $\mathcal{LR}_r$ is a finitely generated monoid, $\mathcal{CLR}_r$ is a rational polyhedral convex cone, and $\mathcal{LR}_r$ is of finite index in the finitely generated monoid $\Lambda^{\times r}\cap\mathcal{CLR}_r$ of integral points in the cone.

The following three theorems are the main results of this article.

\begin{theorem}\label{Theo Theo r0 below d1}
The inequality ${\bf r}_0(\lambda)\leq d_1(q\lambda)$ holds for all $\lambda\in\Lambda^+\setminus\{0\}$ and $q\in \QQ_{>0}$ such that $q\lambda\in\Lambda^+$.
\end{theorem}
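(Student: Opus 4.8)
The plan is to translate the existence of a positive-degree invariant polynomial into a balanced convex relation among exactly $d_1(q\lambda)$ points of the orbit $K\lambda$, and then to read off $0\in C_{d_1(q\lambda)}(K\lambda)$ directly from that relation. Write $\mu=q\lambda\in\Lambda^+$ and $d=d_1(\mu)$, and fix a nonzero invariant $f\in\CC[V_\mu]_d^K=(\mathrm{Sym}^d V_\mu^*)^K$; such an $f$ exists by the hypothesis that $d_1(q\lambda)$ is defined.

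First I would pass from the symmetric power to the full tensor power. Since $\mathrm{Sym}^d V_\mu^*$ is a $K$-submodule (a direct summand) of $(V_\mu^*)^{\otimes d}\cong V_{\mu^*}^{\otimes d}$, where $\mu^*=-w_0\mu$, the invariant $f$ forces the trivial module to occur in $V_{\mu^*}^{\otimes d}$. In the notation of \eqref{For LR i CLR} this says $(\mu^*,\dots,\mu^*)\in\mathcal{LR}_d$, hence $(\mu^*,\dots,\mu^*)\in\mathcal{CLR}_d={\rm Conv}(\mathcal{LR}_d)$. By the orbit-sum description of $\mathcal{CLR}_d$ recalled in \eqref{For LR i CLR}, this is exactly the statement that $0\in K\mu^*+\dots+K\mu^*$ with $d$ summands.

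Next I would remove the dual and rescale. Because $\mu^*$ lies in the Weyl-group orbit of $-\mu$, one has $K\mu^*=-K\mu$, so negating the relation above yields $0\in K\mu+\dots+K\mu$ ($d$ summands); equivalently there are $k_1,\dots,k_d\in K$ with $\sum_{i=1}^d k_i\mu=0$. (In fact the orbit-sum condition is unchanged under $\mu\mapsto\mu^*$, so the appearance of the dual is immaterial.) Dividing by $qd>0$ and using $\mu=q\lambda$ gives $\sum_{i=1}^d\tfrac1d(k_i\lambda)=0$, which exhibits $0$ as the equal-weight convex combination of the $d$ points $k_1\lambda,\dots,k_d\lambda\in K\lambda$. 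By the definition \eqref{For CrX} this means $0\in C_d(K\lambda)$, and therefore ${\bf r}_0(\lambda)\le d=d_1(q\lambda)$ by the definition of ${\bf r}_0$.

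The conceptual point I expect to carry the real weight is the matching of numbers: an invariant of degree $d$ is built out of $\mathrm{Sym}^d$, so it produces a relation involving exactly $d$ orbit points and, crucially, with equal weights $1/d$, which is precisely what lets the degree bound ${\bf r}_0$. The only genuinely nontrivial input is the orbit-sum description of $\mathcal{CLR}_d$ together with the inclusion $\mathcal{LR}_d\subseteq\mathcal{CLR}_d$; both are quoted from the literature in the discussion following \eqref{For LR i CLR}, so no new analytic work is needed. Care is required only with the standard identifications $V_\mu^*\cong V_{\mu^*}$ and $K\mu^*=-K\mu$ and with the scaling by $q$, all of which are routine.
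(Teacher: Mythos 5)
Your proof is correct, and its backbone coincides with the paper's: the paper deduces the theorem from Corollary \ref{Coro Theo b1 leq r0}, which likewise passes from a degree-$d$ invariant in $\CC[V_{q\lambda}]_d^K=(S^dV_{q\lambda}^*)^K$ to a nonzero invariant in the $d$-fold tensor power, hence to membership of $(q\lambda,\dots,q\lambda)$ in the Littlewood--Richardson cone. The one genuine difference is in the last step. The paper closes the argument by invoking the equivalence $r\geq{\bf r}_0(\lambda)\Longleftrightarrow(\lambda,\dots,\lambda)\in\mathcal{CLR}_r$ established in Theorem \ref{Theo Cr cones}, whose proof uses Heckman's momentum maps, Kirwan's theory and a permutation-averaging argument; you observe that only the easy direction is needed here: the orbit-sum description of $\mathcal{CLR}_d$ in \eqref{For LR i CLR} hands you $k_1,\dots,k_d\in K$ with $\sum_i k_i\mu=0$, and dividing by $qd$ immediately exhibits $0$ as an equal-weight convex combination of $d$ points of $K\lambda$, i.e.\ $0\in C_d(K\lambda)$. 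So your route proves Theorem \ref{Theo Theo r0 below d1} without the full strength of Theorem \ref{Theo Cr cones}, at the cost of relying directly on the quoted identity $\mathcal{CLR}_d=\{(\lambda_1,\dots,\lambda_d):0\in K\lambda_1+\dots+K\lambda_d\}$; the hard content of Theorem \ref{Theo Cr cones} (that an arbitrary, possibly unequally weighted, convex relation witnessing $0\in C_r(K\lambda)$ can be converted into an equal-weight one) is simply not needed for this inequality. Your handling of the dual via $K\mu^*=K(-\mu)=-K\mu$ matches Remark \ref{Rem r0 is 2} and is correct.
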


Let $\iota_r:\mathfrak k\to\mathfrak k^{\oplus r}, x\mapsto(x,...,x)$ be the differential of $\varphi_r$.
 
\begin{theorem}\label{Theo Theo Cr cones}
For any coadjoint orbit $K\lambda$ and $r\in\NN$, $0\in C_r(K\lambda)$ if and only if $0\in K\lambda+...+K\lambda$ ($r$-fold sum). The set $\mathfrak A_r:=\{\lambda\in\mathfrak t_+: {\bf r}_0(\lambda)\leq r\}$ is a rational polyhedral convex cone in $\mathfrak t_+$ satisfying 
$\iota_r(\mathfrak A_r)= \mathcal{CLR}_r \cap \iota_r(\mathfrak t_+)$. 
\end{theorem}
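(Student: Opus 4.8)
The plan is to prove the equivalence first and then read off the description of $\mathfrak A_r$ from it, together with the known structure of $\mathcal{CLR}_r$. The backward implication is immediate: if $y_1,\dots,y_r\in K\lambda$ satisfy $y_1+\dots+y_r=0$, then $0=\frac1r(y_1+\dots+y_r)$ is a convex combination of the $y_i$, so $0\in{\rm Conv}\{y_1,\dots,y_r\}\subseteq C_r(K\lambda)$.

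For the forward implication, which is the heart of the matter, I would reinterpret membership in $C_r(K\lambda)$ in terms of the cone $\mathcal{CLR}_r$. Suppose $0\in C_r(K\lambda)$, say $0=\sum_{i=1}^r t_i x_i$ with $x_i=k_i\lambda\in K\lambda$, $t_i\geq 0$ and $\sum_i t_i=1$. Since $\mathfrak t_+$ is a cone we have $t_i\lambda\in\mathfrak t_+$, and $t_i x_i=k_i(t_i\lambda)\in K(t_i\lambda)$; hence the relation $\sum_i t_i x_i=0$ says exactly that $(t_1\lambda,\dots,t_r\lambda)\in\mathcal{CLR}_r$, using the description of $\mathcal{CLR}_r$ in \eqref{For LR i CLR}. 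The same computation run backwards shows, more generally, that $0\in C_r(K\lambda)$ holds if and only if $(t_1\lambda,\dots,t_r\lambda)\in\mathcal{CLR}_r$ for some weights $t_i\geq 0$ with $\sum_i t_i=1$.

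Now I would exploit two structural features of $\mathcal{CLR}_r$: it is invariant under the $S_r$-action permuting the $r$ factors (permuting the orbits in a Minkowski sum does not change whether $0$ lies in the sum), and it is convex, being ${\rm Conv}(\mathcal{LR}_r)$. Averaging the point $(t_1\lambda,\dots,t_r\lambda)$ over its $S_r$-orbit therefore produces a point of $\mathcal{CLR}_r$ whose $j$-th coordinate is $\frac1r(\sum_i t_i)\lambda=\frac1r\lambda$; that is, $(\frac1r\lambda,\dots,\frac1r\lambda)\in\mathcal{CLR}_r$. As $\mathcal{CLR}_r$ is a cone, scaling by $r$ gives $\iota_r(\lambda)=(\lambda,\dots,\lambda)\in\mathcal{CLR}_r$, i.e. $0\in K\lambda+\dots+K\lambda$. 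This symmetrization-and-rescaling step is the genuinely essential idea and the main obstacle; everything hinges on the $S_r$-symmetry and convexity of $\mathcal{CLR}_r$, both built into its definition, together with the care needed to apply the real (as opposed to lattice) description of $\mathcal{CLR}_r$ to an arbitrary $\lambda\in\mathfrak t_+$ and to check that each $t_i\lambda$ remains dominant.

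Finally I would deduce the statement about $\mathfrak A_r$. Because $C_s(K\lambda)\subseteq C_{s+1}(K\lambda)$ (pad a convex combination with a repeated point), the condition ${\bf r}_0(\lambda)\leq r$ is equivalent to $0\in C_r(K\lambda)$, so by the equivalence just proved $\mathfrak A_r=\{\lambda\in\mathfrak t_+:\iota_r(\lambda)\in\mathcal{CLR}_r\}=\mathfrak t_+\cap\iota_r^{-1}(\mathcal{CLR}_r)$. Since $\iota_r$ is injective this yields $\iota_r(\mathfrak A_r)=\iota_r(\mathfrak t_+)\cap\mathcal{CLR}_r$. As $\iota_r$ is a rational linear map and both $\mathcal{CLR}_r$ and $\mathfrak t_+$ are rational polyhedral convex cones, the preimage $\iota_r^{-1}(\mathcal{CLR}_r)$ and its intersection with $\mathfrak t_+$ are again rational polyhedral convex cones, which proves that $\mathfrak A_r$ is one.
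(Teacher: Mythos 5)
Your proposal is correct and follows essentially the same route as the paper: both reduce $0\in C_r(K\lambda)$ to the existence of nonnegative scalars $t_1,\dots,t_r$ (not all zero) with $(t_1\lambda,\dots,t_r\lambda)\in\mathcal{CLR}_r$, and then conclude by symmetrizing over the $S_r$-action and using convexity of the cone — the paper sums the permuted tuples where you average them, which is the same step. The only cosmetic difference is that the paper phrases the first reduction via Heckman momentum maps $\mu^{\underline{q}\cdot\lambda}$ on $q_1K\lambda\times\dots\times q_rK\lambda$, whereas you invoke the Minkowski-sum description of $\mathcal{CLR}_r$ from \eqref{For LR i CLR} directly.
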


\begin{theorem}\label{Theo Theo Er cones}
The set $\mathfrak C_r:=\{\lambda\in\mathfrak t_+: {\bf r}(\lambda)\leq r\}$ is a rational polyhedral convex cone in $\mathfrak t_+$. For $\lambda\in\mathfrak t_+$ the following are equivalent:
\begin{enumerate}
\item[\rm (i)] ${\bf r}(\lambda)\leq r$, i.e., $C_r(K\lambda)$ is convex;
\item[\rm (ii)] the $r$-fold sum $K\lambda+...+K\lambda$ is convex;
\item[\rm (iii)] for every $\xi\in\mathfrak t_+$, $\lambda_{\vert\mathfrak k'_\xi}\in\mathcal{CLR}_{r}(K'_\xi)$, where $K'_\xi$ is the derived subgroup of the centralizer of $\xi$, $\mathfrak k'_\xi$ is its Lie algebra and $\lambda_{\vert\mathfrak k'_\xi}$ is the orthogonal projection of $\lambda$ to $\mathfrak k'_\xi$.
\end{enumerate}
\end{theorem}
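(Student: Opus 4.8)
The plan is to establish the two equivalences (i)$\Leftrightarrow$(iii) and (ii)$\Leftrightarrow$(iii) in parallel; this yields (i)$\Leftrightarrow$(ii) automatically, and the cone structure of $\mathfrak C_r$ then follows by reading (iii) off the finite poset of Levi types. The engine is the exposed-face description of the orbitope ${\rm Conv}(K\lambda)$ from \cite{Biliotti-Ghigi-Heinzner}, Theorem \ref{Theo Theo Cr cones} applied to the derived centralizers $K'_\xi$, and a downward induction on $\dim K$.

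First I would record the face reduction. For $\xi\in\mathfrak t_+$ let $K_\xi$ be the centralizer of $\xi$, with center $\mathfrak z_\xi\supseteq\RR\xi$ of its Lie algebra, and write $\lambda=\lambda^Z_\xi+\lambda'_\xi$, where $\lambda^Z_\xi$ is the orthogonal projection of $\lambda$ to $\mathfrak z_\xi$ and $\lambda'_\xi=\lambda_{\vert\mathfrak k'_\xi}$. The functional $(\cdot\vert\xi)$ attains its maximum $(\lambda\vert\xi)$ on $K\lambda$ exactly along $K_\xi\lambda=\lambda^Z_\xi+K'_\xi\lambda'_\xi$, and the corresponding exposed face is $F_\xi={\rm Conv}(K_\xi\lambda)=\lambda^Z_\xi+{\rm Conv}(K'_\xi\lambda'_\xi)$. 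Since a convex combination, or an $r$-fold sum, of orbit points attains the maximal value of $(\cdot\vert\xi)$ iff each constituent does, I obtain $C_r(K\lambda)\cap F_\xi=\lambda^Z_\xi+C_r(K'_\xi\lambda'_\xi)$ and likewise $\tfrac1r S_r\cap F_\xi=\lambda^Z_\xi+\tfrac1r S_r(K'_\xi\lambda'_\xi)$, where $S_r=K\lambda+\dots+K\lambda$. Thus $C_r$ (resp. $S_r$) fills $F_\xi$ exactly when the partial hull (resp. sum) of the smaller orbit $K'_\xi\lambda'_\xi$ is convex. The case $\xi=0$ gives $K'_0=K$, $\lambda'_0=\lambda$, $\lambda^Z_0=0$, and records the center condition $0\in C_r(K\lambda)$, which by Theorem \ref{Theo Theo Cr cones} is the same for $C_r$ and for $S_r$ and is equivalent to $\lambda\in\mathcal{CLR}_r(K)$.

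Next I would run the induction. Theorem \ref{Theo Theo Cr cones} applied to the semisimple group $K'_\xi$ identifies the center condition $0\in C_r(K'_\xi\lambda'_\xi)$ with $\lambda'_\xi\in\mathcal{CLR}_r(K'_\xi)$, the $\xi$-term of (iii). Since $\dim K'_\xi<\dim K$ for $\xi\neq 0$, the inductive hypothesis rewrites convexity of $C_r(K'_\xi\lambda'_\xi)$ as the conjunction of the center conditions for all derived centralizers of $K'_\xi$; each of these is again of the form $K'_{\xi'}$ for $\xi'$ a small generic perturbation of $\xi$ inside $\mathfrak t_+$. Unwinding the recursion over the finite poset of Levi types collapses both ``$C_r(K\lambda)$ convex'' and ``$S_r$ convex'' to the single statement $\lambda'_\xi\in\mathcal{CLR}_r(K'_\xi)$ for every $\xi\in\mathfrak t_+$, which is (iii). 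The inclusions $\tfrac1r S_r\subseteq C_r\subseteq{\rm Conv}(K\lambda)$, together with $K\lambda\subseteq\tfrac1r S_r$, give the easy implication (ii)$\Rightarrow$(i) directly and reduce each convexity to filling the interior of the polytope once its boundary faces are filled.

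The one non-formal step, which I expect to be the main obstacle, is the base of this inductive clause: if $0\in C_r(K\lambda)$ and every proper exposed face of ${\rm Conv}(K\lambda)$ is already contained in $C_r(K\lambda)$ (resp. in $S_r$), then $C_r(K\lambda)={\rm Conv}(K\lambda)$ (resp. $S_r=r\,{\rm Conv}(K\lambda)$). A naive radial argument through $0$ writes an interior point using $2r$ orbit points and is too wasteful. I would instead argue on the Minkowski-sum side by a Shapley--Folkman-type exactness: for $p\in r\,{\rm Conv}(K\lambda)$ write $p=\sum_{i=1}^r y_i$ with $y_i\in{\rm Conv}(K\lambda)$ and all but at most $\dim\mathfrak t$ of the $y_i$ lying in $K\lambda$ (the effective defect bound coming from Lemma \ref{Lemma ConvX is Cellplus1}), then push each exceptional $y_i$ into the relative interior of the face it spans and apply the already-established face-filling there to replace it by genuine orbit points without raising the total count beyond $r$. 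Keeping this count sharp is the delicate point; equivalently one may phrase the whole reduction through the convexity of the momentum-map image for the $K$-action on $(K\lambda)^{\times r}$, whose vertices and walls encode precisely the center and face data above. Finally, the polyhedrality of $\mathfrak C_r$ needs no further analysis: $K'_\xi$ depends only on the open face $\sigma$ of $\mathfrak t_+$ containing $\xi$, of which there are finitely many, so (iii) is the finite system $\lambda_{\vert\mathfrak k'_\sigma}\in\mathcal{CLR}_r(K'_\sigma)$. Each condition is the preimage of the rational polyhedral cone $\mathcal{CLR}_r(K'_\sigma)$ under the linear projection $\lambda\mapsto\lambda_{\vert\mathfrak k'_\sigma}$, hence a rational polyhedral cone, and a finite intersection of such cones is again a rational polyhedral convex cone, giving $\mathfrak C_r$.
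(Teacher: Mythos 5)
Your face reduction, your treatment of (ii)$\Rightarrow$(i), and your polyhedrality argument all match the paper's structure, and identifying the $\xi$-term of (iii) with the center condition $0\in C_r(K'_\xi\lambda'_\xi)$ via Theorem \ref{Theo Theo Cr cones} is exactly right. But the step you yourself flag as ``the one non-formal step'' is a genuine gap, and the Shapley--Folkman route you propose does not close it. Shapley--Folkman writes $p\in r\,{\rm Conv}(K\lambda)$ as $\sum_{i=1}^r y_i$ with at most $\dim\mathfrak k$ exceptional $y_i\in{\rm Conv}(K\lambda)\setminus K\lambda$ (not $\dim\mathfrak t$; Lemma \ref{Lemma ConvX is Cellplus1} is a Carath\'eodory-type statement and yields no Minkowski-sum defect bound). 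There is then no way to ``replace each exceptional $y_i$ by genuine orbit points without raising the total count beyond $r$'': a non-orbit point of a face $F_\xi$ is represented, by your own face-filling hypothesis, as $\frac1r$ times a sum of $r$ orbit points, so each replacement multiplies the number of summands rather than preserving it, and no rescaling is available inside a fixed $r$-fold sum. The downward induction also threatens circularity in the case where all $r$ summands are exceptional.

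The mechanism that actually closes this step --- which you mention only parenthetically at the end --- is Kirwan's non-abelian convexity theorem: $A_r=\frac1r(K\lambda+\dots+K\lambda)$ is the image of the momentum map $\mu^{\iota_r(\lambda)}$ on $(K\lambda)^{\times r}$, so $A_r\cap\mathfrak t_+$ is a convex polytope. One therefore never needs to fill faces or interiors pointwise: it suffices to check that $A_r$ contains the finitely many extreme points $\nu_{\lambda,\hat\Pi}$ of $\mathfrak t_+\cap{\rm Conv}(K\lambda)$, one for each $\hat\Pi\subset\Pi$; each such point is the central point of the exposed face ${\rm Conv}(K_{(\hat\Pi)}\lambda)$ and is caught by Theorem \ref{Theo Theo Cr cones} applied to $K'_{(\hat\Pi)}$, and convexity of $A_r\cap\mathfrak t_+$ then forces $A_r\supseteq K\,{\rm Conv}(W\lambda)={\rm Conv}(K\lambda)$. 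This is precisely the content of the paper's Theorem \ref{Theo rlambda is max r0lambdahatPi}, namely ${\bf r}(\lambda)=\max_{\hat\Pi}{\bf r}_0(\lambda_{\vert\hat\Pi})$. With that ingredient your remaining steps go through; without it, the argument as written does not constitute a proof.
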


Further details and corollaries from the above theorems will be given in the main text. In \S\ref{Sect Moment i CLR} we recall the interpretation of the Littlewood-Richardson cone in terms of momentum maps, we derive properties of ${\bf r}_0$ and prove Theorems \ref{Theo Theo r0 below d1} and \ref{Theo Theo Cr cones}. In \S\ref{Sect PartConv} we describe a relationship between ${\bf r}_0$ and ${\bf r}$, prove Theorem \ref{Theo Theo Er cones} and compute the values of ${\bf r}_0$ and ${\bf r}$ at fundamental weights of classical groups.\\

\noindent{\bf Acknowledgement:} A substantial part of the work on this article was done at the Ruhr-Universit\"at Bochum, with the support of DFG grant SFB-TR191, ``Symplectic Structures in Geometry, Algebra and Dynamics''. The author is grateful to St{\'e}phanie Cupit-Foutou and Peter Heinzner for helpful discussions and support.

\section{The momentum map and the Littlewood-Richardson cone}\label{Sect Moment i CLR}

We begin by recalling Heckman's interpretation of projections of coadjoint orbits as momentum maps, \cite{Heckman}, and the resulting interpretation of the Littlewood-Richardson monoid and cone in the framework of the Geometric Invariant Theory (GIT) of Hilbert-Mumford and Kirwan, \cite{Kirwan}. The survey \cite{Brion-Survey} contains more details and historical references for the background results outlined here. The first basic fact is that every coadjoint orbit $K\lambda\subset \mathfrak k^*\cong \mathfrak k$ of a compact connected Lie group $K$ admits a canonical invariant K\"ahler structure, called the Kostant-Kirillov-Sourieau structure, so that the inclusion $K\lambda\subset \mathfrak k^*$ is a $K$-equivariant momentum map. The underlying complex manifolds of the coadjoint orbits are known as the flag varieties of $K$. Two orbits are equivariantly isomorphic as complex manifolds if and only if their points of intersection with a fixed Weyl chamber $\mathfrak t_+$ belong to the relative interior of the same face of $\mathfrak t_+$. By the Borel-Weil theorem, for integral $\lambda\in\Lambda^+$, the K\"ahler structure corresponds to a $K$-linearized holomorphic line bundle $\mathcal L_\lambda$ with space of global sections isomorphic to the irreducible representation $V_{\lambda}^*$. Moreover $K\lambda$ is equivariantly isomorphic to the projective orbit of a highest weight vector $K[v_\lambda]\subset\PP(V_\lambda)$ with the K\"ahler structure induced by the Fubini-Study form of a suitably normalized $K$-invariant Hermitean form on $V_\lambda$. The homogeneous coordinate ring of the projective variety $K[v_\lambda]$, i.e., the ring of sections of $\mathcal L_\lambda$, is
$$
R_\lambda:=\bigoplus\limits_{q\in \ZZ_{\geq 0}} H^0(K\lambda,\mathcal L_{\lambda}^q) \cong \bigoplus\limits_{q\in \ZZ_{\geq 0}} V_{q\lambda}^* \;.
$$
Heckman's theorem, \cite{Heckman}, states that, if $K \subset \tilde K$ is an embedding of compact connected Lie groups, $\iota:\mathfrak k\hookrightarrow\tilde{\mathfrak k}$ is the corresponding inclusion of Lie algebras, and $\iota^*:\tilde{\mathfrak k}^*\to \mathfrak k^*$ is the dual map, then the restriction of $\iota^*$ to the coadjoint $\tilde K$-orbit $Z_{\tilde\lambda}=\tilde K\tilde\lambda$ through any fixed $\tilde\lambda\in\tilde{\mathfrak k}^*$,
$$
\mu=\mu_K^{\tilde\lambda}:=(\iota_r^*)_{\vert_{Z_{\tilde\lambda}}}: Z_{\tilde\lambda}\to \mathfrak k^*\cong \mathfrak k
$$
is a $K$-equivariant momentum map with respect to the canonical $\tilde K$-equivariant Kostant-Kirillov-Sourieau structure on $Z_{\tilde\lambda}$. This puts us in the setting of Kirwan's Geometric Invariant Theory, \cite{Kirwan}, and we have the following two fundamental results. First, the intersection of the momentum image with a Weyl chamber, $\mu(Z_{\tilde\lambda})\cap\mathfrak t_+$, is a convex polytope, called the momentum polytope. Second, for integral $\tilde\lambda\in\tilde\Lambda^+$, the symplectic reduction $\mu^{-1}(0)/K$ can be identified with the GIT-quotient, which is an algebraic variety isomorphic to the projective spectrum of the invariant ring $(R_{\tilde\lambda})^K$. This invariant ring can in turn be interpreted by applying the Borel-Weil theorem to $\tilde K$:
$$
\mu^{-1}(0)/K \cong {\rm Proj}(R_{\tilde\lambda}^K) \;,\quad R_{\tilde\lambda}^K\cong  \bigoplus\limits_{q\in \ZZ_{\geq 0}} (V_{q\tilde\lambda}^*)^K \;.
$$
Since $V_{\tilde\lambda}^K\ne 0$ if and only if $(V_{\tilde\lambda}^*)^K\ne 0$, we have the equivalences
\begin{gather}\label{For 0inmu VKne0}
0\in \mu(Z_{\tilde\lambda}) \;\tst\; R_{\tilde\lambda}^K \ne \CC \; \tst\; \exists q\in\NN: (V_{q\tilde\lambda})^K\ne 0 \;.
\end{gather}
The set
$$
\mathcal{CLR}(K\subset \tilde K) = \{\tilde\lambda\in\tilde{\mathfrak t}_+:0\in\mu(Z_{\tilde\lambda})\}
$$
is a rational polyhedral convex cone, known as the genaralized Littlewood-Richardson cone, described by inequalities derived from the Hilbert-Mumford functions of suitable one-parameter subgroups of $K$. These inequalities have a long history culminating with Ressayre's characterization of a minimal list of inequalities, \cite{Ressayre-GITandEigen}. We shall not need the exact form of the inequalities, and satisfy ourselves with the aforementioned properties of the cone. The set of integral points $\tilde\Lambda\cap\mathcal{CLR}(K\subset\tilde K)$ is a finitely generated monoid containing as a finitely generated submonoid of finite index the so-called generalized Littlewood-Richardson monoid
$$
\mathcal{LR}(K\subset\tilde K) = \{\tilde\lambda\in\tilde\Lambda^+: (V_{\tilde\lambda})^K\ne 0\} \;.
$$
Thus there exists $q\in\NN$ such that $q(\tilde\Lambda\cap\mathcal{CLR}(K\subset\tilde K))\subset\mathcal{LR}(K\subset\tilde K)$. Furthermore, we have $\mathcal{CLR}(K\subset\tilde K)={\rm Conv}(\mathcal{LR}(K\subset\tilde K))$. We refer the reader to \cite{Brion-Survey} for more details.

Here we consider what is in fact a prototypical case for the above construction: the diagonal subgroup $K\subset\tilde K:=K^{\times r}$ of a $r$-fold Cartesian product for some fixed $r\in\NN$. On the level of Lie algebras we have the inclusion $\iota_r:\mathfrak k\hookrightarrow\tilde{\mathfrak k}:=\mathfrak k^{\oplus r}$, $x\mapsto(x,...,x)$. Every Weyl chamber $\mathfrak t_+$ of $\mathfrak k$ is contained in unique Weyl chamber of $\tilde{\mathfrak k}$, namely $\tilde{\mathfrak t}_+=(\mathfrak t_+)^{\times r}$. We also have $\tilde\Lambda=\Lambda^{\times r}$. The irreducible representations of $\tilde K$ are tensor products of irreducible representations of $K$, i.e., for $\tilde\lambda=(\lambda_1,...,\lambda_r)\in\tilde\Lambda^+$, $V_{\tilde\lambda}=V_{\lambda_1}\otimes...\otimes V_{\lambda_r}$. Therefore, the space $V_{\tilde\lambda}^K$ of $K$-invariant vectors in an irreducible $\tilde K$-module is, in this setting, the space of invariant tensors in a tensor product of $r$ irreducible $K$-modules.

For $\tilde\lambda=(\lambda_1,...,\lambda_r)\in\tilde{\mathfrak t}_+$ denote by $Z_{\tilde\lambda}=\tilde K\tilde\lambda=K\lambda_1\times...\times K\lambda_r \subset \tilde{\mathfrak k}$ the (co)adjoint $\tilde K$-orbit through $\tilde\lambda$ equipped with its canonical Kostant-Kirillov-Sourieau K\"ahler structure. 

To apply Heckman's theorem, we note that the dual map $\iota_r^*$ is given by
$$
\iota_r^*:\tilde{\mathfrak k}\cong\tilde{\mathfrak k}^*\cong(\mathfrak k^*)^{\oplus r}\to \mathfrak k^*\cong \mathfrak k, \quad (x_1,...,x_r)\mapsto x_1+...+x_r \;.
$$
and so
$$
\mu=\mu_K^{\tilde\lambda}=(\iota_r^*)_{\vert_{Z_{\tilde\lambda}}}: Z_{\tilde\lambda}\to \mathfrak k
$$
is a momentum map for the diagonal $K$-action on $Z_{\tilde\lambda}$. The momentum image is 
$$
\mu(Z_{\tilde\lambda})=K\lambda_1+K\lambda_2+...+K\lambda_r \;.
$$
Thus we can apply the GIT framework explained above. First, by Kirwan's theorem, $\mu(Z_{\tilde\lambda})\cap\mathfrak t_+$ is a convex polytope. Second, we can interpret the Littlewood-Richardson cone $\mathcal{CLR}_r$ defined in the Introduction as
$$
\mathcal{CLR}_r=\mathcal{CLR}(K\subset K^{\times r})=\{(\lambda_1,...,\lambda_r)\in(\mathfrak t_+)^{\times r}: 0\in\mu(Z_{(\lambda_1,...,\lambda_r)})\} \;.
$$
The set of integral points of this cone is
$$
\tilde\Lambda\cap\mathcal{CLR}_r = \{(\lambda_1,...,\lambda_r)\in(\Lambda^+)^{\times r}:\exists q\in\NN, (V_{q\lambda_1}\otimes...\otimes V_{q\lambda_r})^K\ne 0\} \;.
$$
and is a finitely generated monoid containing $\mathcal{LR}_r$ as a submonoid of finite index. In particular, for $\tilde\lambda=(\lambda_1,...,\lambda_r)\in\tilde\Lambda^+$, the equivalence (\ref{For 0inmu VKne0}) takes the form
$$
0\in \mu(Z_{\tilde\lambda}) \;\tst\; \exists q\in\NN:\;(V_{q\lambda_1}\otimes V_{q\lambda_2}\otimes ...\otimes V_{q\lambda_r})^K\ne 0 \;.
$$
With this preparation, we turn to our applications.

\begin{theorem}\label{Theo Cr cones}
For any (co)adjoint orbit $K\lambda\in\mathfrak k$ and $r\in\NN$, $0\in C_r(K\lambda)$ if and only if $0\in K\lambda+...+K\lambda\subset\mathfrak k$ ($r$-fold sum). Furthermore, the set $\mathfrak A_r:=\{\lambda\in\mathfrak t_+: {\bf r}_0(\lambda)\leq r\}$ is a rational polyhedral convex cone in $\mathfrak t_+$ satisfying 
$\iota_r(\mathfrak A_r)= \mathcal{CLR}_r \cap \iota_r(\mathfrak t_+)$. 
\end{theorem}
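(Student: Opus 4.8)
The plan is to establish the asserted equivalence first, since the remaining statements about $\mathfrak A_r$ will follow from it by soft arguments. Since the partial hulls are nested, $C_r(K\lambda)\subseteq C_{r+1}(K\lambda)$, we have ${\bf r}_0(\lambda)\leq r$ if and only if $0\in C_r(K\lambda)$. Granting the equivalence $0\in C_r(K\lambda)\Leftrightarrow 0\in K\lambda+\dots+K\lambda$, the very definition of $\mathcal{CLR}_r$ in (\ref{For LR i CLR}) then gives $\mathfrak A_r=\iota_r^{-1}(\mathcal{CLR}_r)\cap\mathfrak t_+$, where $\iota_r(\lambda)=(\lambda,\dots,\lambda)$. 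As $\iota_r$ is an injective linear map defined over the weight lattice, and $\mathcal{CLR}_r$ and $\mathfrak t_+$ are rational polyhedral convex cones, the preimage $\iota_r^{-1}(\mathcal{CLR}_r)$ and its intersection with $\mathfrak t_+$ would again be rational polyhedral convex cones; injectivity of $\iota_r$ would then yield $\iota_r(\mathfrak A_r)=\iota_r(\iota_r^{-1}(\mathcal{CLR}_r))\cap\iota_r(\mathfrak t_+)=\mathcal{CLR}_r\cap\iota_r(\mathfrak t_+)$. Thus the whole theorem reduces to the stated equivalence.

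One direction is immediate: if $0=y_1+\dots+y_r$ with $y_i\in K\lambda$, then $0=\tfrac1r(y_1+\dots+y_r)$ exhibits $0$ as a convex combination of the $y_i$, so $0\in C_r(K\lambda)$. For the converse I would first rewrite membership in $C_r(K\lambda)$ using scaled orbits. Since the $K$-action is linear, $K(t\lambda)=t\,K\lambda$ for $t\geq 0$, so for a probability vector $(t_1,\dots,t_r)$ the set of convex combinations $\{\sum_i t_i z_i:z_i\in K\lambda\}$ equals $K(t_1\lambda)+\dots+K(t_r\lambda)$. Hence $0\in C_r(K\lambda)$ if and only if there is a vector $(t_1,\dots,t_r)$ with $t_i\geq 0$ and $\sum_i t_i=1$ such that $0\in K(t_1\lambda)+\dots+K(t_r\lambda)$, which by the definition of $\mathcal{CLR}_r$ means $(t_1\lambda,\dots,t_r\lambda)\in\mathcal{CLR}_r$; note that $t_i\lambda\in\mathfrak t_+$ precisely because $t_i\geq 0$ and $\mathfrak t_+$ is a cone.

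The key step would be a symmetrization. Consider
$$
Q_\lambda:=\{t\in\RR^r_{\geq 0}:(t_1\lambda,\dots,t_r\lambda)\in\mathcal{CLR}_r\}.
$$
Being the intersection of the orthant with the preimage of the convex cone $\mathcal{CLR}_r$ under the linear map $t\mapsto(t_1\lambda,\dots,t_r\lambda)$, the set $Q_\lambda$ is a convex cone; and since $\mathcal{CLR}_r$ is invariant under permuting the $r$ factors of $K^{\times r}$, the cone $Q_\lambda$ is invariant under the permutation action of $\mathfrak S_r$ on coordinates. Now assume $0\in C_r(K\lambda)$. By the previous paragraph $Q_\lambda$ meets the probability simplex, so it contains a nonzero point $t\in\RR^r_{\geq 0}$. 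Averaging over $\mathfrak S_r$, the point $\bar t:=\frac{1}{r!}\sum_{\sigma\in\mathfrak S_r}\sigma\cdot t=(\bar s,\dots,\bar s)$ with $\bar s=\frac1r\sum_i t_i>0$ is a convex combination of $\mathfrak S_r$-translates of $t$, hence lies in $Q_\lambda$. As $Q_\lambda$ is a cone and $\bar s>0$, rescaling by $1/\bar s$ gives $(1,\dots,1)\in Q_\lambda$, that is $(\lambda,\dots,\lambda)\in\mathcal{CLR}_r$, i.e. $0\in K\lambda+\dots+K\lambda$. This would prove the converse and hence the equivalence.

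I expect the converse to be the heart of the matter, and the reason the symmetrization is needed is instructive: merely clearing denominators in a rational relation $0=\sum_i t_i x_i$ would only place $0$ in an $N$-fold sum with $N$ a common denominator, typically far larger than $r$, giving the wrong bound. Exploiting instead that $\mathcal{CLR}_r$ is a permutation-invariant convex cone is what lets one pass from an arbitrary probability weighting to the uniform one \emph{without} increasing the number of summands, keeping it equal to $r$. Once this is in place, the polyhedrality of $\mathfrak A_r$ and the identity for $\iota_r(\mathfrak A_r)$ are then purely formal, as sketched in the first paragraph.
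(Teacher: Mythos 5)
Your proposal is correct and follows essentially the same route as the paper: both rewrite $C_r(K\lambda)$ as the union over nonnegative weight vectors $(t_1,\dots,t_r)$ of the scaled orbit sums $K(t_1\lambda)+\dots+K(t_r\lambda)$ (the paper phrases this via the momentum maps $\mu^{\ul q\cdot\lambda}$, you via linearity of the action), and then both pass from an arbitrary weighting to the uniform one by exploiting that $\mathcal{CLR}_r$ is a permutation-invariant convex cone. Your symmetrization by averaging over $\mathfrak S_r$ is exactly the paper's step of summing the permuted tuples, and your deduction of the polyhedral-cone statement from $\mathfrak A_r=\mathfrak t_+\cap\iota_r^{-1}(\mathcal{CLR}_r)$ matches what the paper leaves implicit.
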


\begin{proof}
We consider the $\tilde K$-homogeneous complex manifold $Z=(K/K_\lambda)^{\times r}$, which is $\tilde K$-equivariantly isomorphic to $(K\lambda)^{\times r}$ as a complex manifold. This manifold can be endowed with different K\"ahler structures and $K$-momentum maps arising from the different $\tilde K$-equivariant embeddings of $Z$ into $\tilde{\mathfrak k}$; these embedding are obtained as $Z\cong \tilde K\tilde\lambda'\subset\tilde{\mathfrak k}$ for $\tilde\lambda'$ belonging to the relative interior of the same face of $\tilde{\mathfrak t}_+$ as $(\lambda,...,\lambda)$. It is convenient to take into consideration cases, where instead of an embedding of $Z$ we have a map whose image is isomorphic to $(K/K_{\lambda})^{\times s}$ for some $s\leq r$. For $\lambda\in\mathfrak t_+$ and $\ul{q}=(q_1,...,q_r)\in(\RR_{\geq 0})^{\times r}\setminus\{0\}$, we denote $\ul{q}\cdot\lambda := (q_1\lambda,...,q_r\lambda)\in \tilde{\mathfrak t}_+$ and $Z_{\ul{q}\cdot\lambda}:=\tilde K\tilde\lambda=q_1K\lambda\times...\times q_rK\lambda \subset \tilde{\mathfrak k}$. Then, by Heckman's theorem,
$$
\mu^{\ul{q}\cdot\lambda}:=\frac{1}{\sum q_j}(\iota_r^*)_{\vert_{Z_{\ul{q}\cdot\lambda}}}: Z_{\ul{q}\cdot\lambda}\to  \mathfrak k^* \cong\mathfrak k
$$
is a momentum map for the diagonal $K$-action on $Z_{\ul{q}\cdot\lambda}$ and the Kostant-Kirillov-Sourieau K\"ahler structure multiplied by a factor of $\frac{1}{\sum q_j}$. We introduce this factor, because we want the image of $\mu^{\ul{q}\cdot\lambda}$ to be contained in the partial convex hull $C_r(K\lambda)$ defined in (\ref{For CrX}). We observe that $C_r(K\lambda)$ consists of the images of all such momentum maps, i.e., 
$$
C_r(K\lambda)=\bigcup\limits_{\ul{q}\in(\RR_{\geq 0})^{\times r}\setminus\{0\}}\mu^{\ul{q}\cdot\lambda}(Z_{\ul{q}\cdot\lambda}) \;.
$$
Hence, the condition $0\in C_r(K\lambda)$ can now be reformulated in the following equivalent forms:

(a) $r\geq {\bf r}_0(\lambda)$;

(b) $\exists\ul{q}=(q_1,...,q_r)\in(\RR_{\geq 0})^{\times r}\setminus\{0\}$ such that $0\in \mu^{\ul{q}\cdot\lambda}(Z_{\ul{q}\cdot\lambda})$;

(c) $\exists\ul{q}=(q_1,...,q_r)\in(\RR_{\geq 0})^{\times r}\setminus\{0\}$ such that $(q_1\lambda,...,q_r\lambda)\in\mathcal{CLR}_r$.

Let us notice that the convex cone $\mathcal{CLR}_r\subset (\mathfrak t_+)^{\times r}$ is stable under permutations of the components, as can be seen directly from the definition (\ref{For LR i CLR}). This implies that condition (c) is equivalent to $(\lambda,...,\lambda)\in\mathcal{CLR}_r$. Indeed, if such $\ul{q}$ exists, then all permutations $(q_{\sigma(1)\lambda},...,q_{\sigma(r)}\lambda)$ belong to $\mathcal{CLR}_r$ and so does their sum, but this sum is positively proportional to $(\lambda,...,\lambda)$, so $(\lambda,...,\lambda)\in\mathcal{CLR}_r$. We obtain
$$
r\geq {\bf r}_0(\lambda) \;\tst\; (\lambda,...,\lambda)\in\mathcal{CLR}_r \;,
$$
which completes the proof of the theorem.
\end{proof}

Next, we derive a corollary concerning the degrees of $K$-invariant elements of the polynomial ring $\CC[V_{\lambda}]$ over the irreducible representation space $V_\lambda$ of $K$. Recall that $\CC[V_{\lambda}]_d$ denotes the space of homogeneous polynomials of degree $d$; this space can be interpreted as the $d$-th symmetric tensor power of the dual $K$-module $V_\lambda^*$. Furthermore, we have $(S^dV_\lambda)^*\cong S^d(V_\lambda^*)$. The dual module $V_\lambda^*$ is irreducible and we denote by $\lambda^*$ its highest weight, so that $V_\lambda^*\cong V_{\lambda^*}$.

\begin{coro}\label{Coro Theo b1 leq r0}
Let $\lambda\in\Lambda^+\setminus\{0\}$ and $b_1(\lambda)=\min\{b\in\NN:\exists q\in\NN:\CC[V_{q\lambda}]_b^K\ne 0\}$. Then the inequality $b_1(\lambda)\geq r_0(\lambda)$ holds.
\end{coro}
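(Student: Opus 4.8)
The plan is to translate the nonvanishing of a $K$-invariant polynomial into a membership in the Littlewood-Richardson cone $\mathcal{CLR}_b$, and then to invoke Theorem \ref{Theo Cr cones}. Set $b=b_1(\lambda)$ and fix $q\in\NN$ with $\CC[V_{q\lambda}]_b^K\ne 0$. First I would rewrite this space of invariants as $\CC[V_{q\lambda}]_b=S^b(V_{q\lambda}^*)$, the $b$-th symmetric power of the dual module $V_{q\lambda}^*=V_{(q\lambda)^*}$.

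Next, since $K$ is compact and we work over $\CC$, complete reducibility applies and $S^bW$ occurs as a direct summand (indeed, as the image of the symmetrization idempotent) of $W^{\otimes b}$ for any finite-dimensional $K$-module $W$. Taking $W=V_{q\lambda}^*$, the hypothesis $(S^bW)^K\ne 0$ then forces $(W^{\otimes b})^K\ne 0$. Dualizing, and using that the trivial module is self-dual so that $\dim U^K=\dim(U^*)^K$ for every finite-dimensional $U$, I obtain $(V_{q\lambda}^{\otimes b})^K\ne 0$, i.e.\ $(V_{q\lambda}\otimes\cdots\otimes V_{q\lambda})^K\ne 0$ ($b$-fold tensor product).

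This last statement says precisely that $(q\lambda,\dots,q\lambda)\in\mathcal{LR}_b\subset\mathcal{CLR}_b$. Because $\mathcal{CLR}_b$ is a convex cone, dividing by $q>0$ gives $(\lambda,\dots,\lambda)\in\mathcal{CLR}_b$. By the equivalence established in the proof of Theorem \ref{Theo Cr cones}, namely $r\geq{\bf r}_0(\lambda)\;\tst\;(\lambda,\dots,\lambda)\in\mathcal{CLR}_r$, this yields $b\geq{\bf r}_0(\lambda)$, that is, $b_1(\lambda)\geq{\bf r}_0(\lambda)$, as claimed.

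The argument is essentially bookkeeping once the momentum-map/cone dictionary of Theorem \ref{Theo Cr cones} is in place; the only point requiring care is the chain of reductions from invariant polynomials to invariant tensors, where one must correctly pass between $V_{q\lambda}$ and its dual and remember that allowing the auxiliary factor $q$ in the definition of $b_1(\lambda)$ is exactly what lets the integral point $(q\lambda,\dots,q\lambda)$ be rescaled within the \emph{cone} $\mathcal{CLR}_b$, rather than forcing membership in the monoid $\mathcal{LR}_b$ itself.
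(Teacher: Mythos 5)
Your argument is correct and follows essentially the same route as the paper: identify $\CC[V_{q\lambda}]_b$ with a symmetric power, use that the symmetric power is a summand of the tensor power to pass to $(V_{q\lambda}^{\otimes b})^K\ne 0$ (dualizing along the way, which the paper does via $b_1(\lambda)=b_1(\lambda^*)$), and conclude via the equivalence $(\lambda,\dots,\lambda)\in\mathcal{CLR}_b\;\tst\;b\geq{\bf r}_0(\lambda)$ from Theorem \ref{Theo Cr cones}. No gaps.
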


\begin{proof} According to the remarks leading to the corollary, we have $\CC[V_{\lambda}]_r=S^rV_{\lambda}^*$ and $\CC[V_{\lambda}]_r^*=S^rV_{\lambda}\cong \CC[V_{\lambda^*}]_r$. Since any given $K$-module has $K$-invariant vectors if and only if its dual has $K$-invariant vectors, we have $b_1(\lambda)=b_1(\lambda^*)$ and we may consider $S^rV_{\lambda}$ instead of $\CC[V_{\lambda}]_r$. Now, the existence of $q\in\NN$ such that $(S^rV_{q\lambda})^K\ne 0$ clearly implies the existence of $q\in\NN$ such that $(V_{q\lambda}^{\otimes r})^K\ne 0$, which in turn is equivalent to $(\lambda,...,\lambda)\in\mathcal{CLR}_r$. But we have shown in Theorem \ref{Theo Cr cones} that the latter is equivalent to $r\geq {\bf r}_0(\lambda)$, and this completes the proof.
\end{proof}

\begin{rem}
Let $\lambda\in\mathfrak t_+$, $r\in\NN$ and 
$$\mathfrak Q_{\lambda,r}:=\{(q_1\lambda,...,q_r\lambda)\in(\mathfrak t_+)^{\times r}:q_j\in\RR_{\geq0}\forall j\},$$
which is a simplicial cone in $(\mathfrak t_+)^{\times r}$. Then $\mathfrak Q_{\lambda,r}\nsubseteq \mathcal{CLR}_r$ and
$$r\geq {\bf r}_0(\lambda) \;\tst\; \mathfrak Q_{\lambda,r}\cap \mathcal{CLR}_r\ne \{0\} \;\tst\; \mathfrak Q_{\lambda,r}\cap \partial(\mathcal{CLR}_r)\ne \{0\}.$$ 
Consequently, if $\lambda\in\Lambda^{++}:=\Lambda\cap {\rm Relint}(\mathfrak t_+)$ is strictly dominant and $r={\bf r}_0(\lambda)$, then $\mathfrak Q_{\lambda,r}$ intersects a regular face of $\mathcal{CLR}_{r}$. It would be interesting to understand which regular faces of $\mathcal{CLR}_{r}$ can be attained in this way, i.e., intersect $\mathfrak Q_{\lambda,r}$ for some $\lambda\in\Lambda^{++}$.
\end{rem}

\section{Coadjoint orbitopes and partial convex hulls}\label{Sect PartConv}

Here we establish some basic properties of the partial convex hulls $C_r(K\lambda)$ of coadjoint $K$-orbits, derive combinatorial interpretations for ${\bf r}_0(\lambda)$ and ${\bf r}(\lambda)$, and compute the values for all fundamental weights of classical groups. We also recall the description of the faces of the orbitope ${\rm Conv}(K\lambda)$ by Biliotti, Ghigi and Heinzner, \cite{Biliotti-Ghigi-Heinzner}, as it is used in an essential way in the our approach to ${\bf r}(\lambda)$.

Let $\Delta\subset\Lambda$ be the root system of $K$ with respect to $T$, split as $\Delta=\Delta^+\sqcup\Delta^-$ by the chosen Weyl chamber $\mathfrak t_+$, and let $\Pi\subset\Delta^+$ be the set of simple roots. Let $W=N_K(T)/T$ be the Weyl group acting on $\mathfrak t$ as the group generated by the (simple) root reflections. Recall that $K\lambda\cap \mathfrak t = W\lambda$ for every $\lambda\in\mathfrak t$. By a classical theorem of Kostant, if $\iota_T^*:\mathfrak k\to\mathfrak t$ is the orthogonal projection, then $\iota_T^*(K\lambda)={\rm Conv}(W\lambda)={\rm Conv}(K\lambda)\cap\mathfrak t$. Consequently, 
$$
{\rm Conv}(K\lambda)=K{\rm Conv}(W\lambda) \;.
$$
It turn out that all faces of ${\rm Conv}(K\lambda)$ arise from faces of the polytope ${\rm Conv}(W\lambda)$. We continue with necessary notation.

The complexified Lie algebra $\mathfrak k^c=\mathfrak k\oplus i\mathfrak k$ is a complex semisimple Lie algebra with root space decomposition: 
$$
\mathfrak k^c = \mathfrak t^c\oplus(\bigoplus\limits_{\alpha\in\Delta} \CC e_\alpha)\;.
$$
The root vectors can be chosen so that 
$$
\mathfrak k = \mathfrak t\oplus(\bigoplus\limits_{\alpha\in\Delta^+} (\RR(e_\alpha-e_{-\alpha})\oplus \RR i(e_\alpha+e_{-\alpha}))) \;.
$$
For $\xi\in\mathfrak t$, the centralizer subalgebra $\mathfrak k_\xi=\mathfrak z_{\mathfrak k}(\xi)$ is given by
$$
\mathfrak k_\xi = \mathfrak t \oplus \mathfrak k\cap(\bigoplus\limits_{\alpha\in\Delta:(\alpha|\xi)=0} \CC e_\alpha) \;,\quad  \;.
$$
The corresponding centralizer subgroup of $K$ is connected, and we have $K_\xi = Z_K(\xi) = {\rm exp}(\mathfrak k_\xi)$. For $\xi\in\mathfrak t_+$, $\mathfrak k_\xi$ is uniquely determined by the set of simple roots vanishing on $\xi$, $\Pi^\xi=\{\alpha\in\Pi:(\alpha|\xi)=0\}$. Conversely, any subset $\hat\Pi\subset\Pi$ gives rise to a subalgebra $\mathfrak k_{\hat\Pi}\subset\mathfrak k$ with center $\mathfrak z_{(\hat\Pi)}:=\mathfrak z(\mathfrak k_{(\hat\Pi)})=\cap_{\alpha\in\hat\Pi}{\rm ker}\alpha$.

\begin{theorem}\label{Theo BGH ConvKlambda} (Biliotti-Ghigi-Heinzner \cite{Biliotti-Ghigi-Heinzner}) Let $\lambda\in \mathfrak t_+$.

Then ${\rm Conv}(K\lambda)=K{\rm Conv}(W\lambda)$ holds, the faces of ${\rm Conv}(K\lambda)$ are exposed and are exactly the subsets of the form
$$
{\rm Conv}(gK_{(\hat\Pi)}\lambda)=gK_{(\hat\Pi)}{\rm Conv}(W_{(\hat\Pi)}\lambda)
$$
for $\hat\Pi\subset\Pi$ and $g\in K$.
\end{theorem}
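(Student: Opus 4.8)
The plan is to split the statement into three tasks: the identity ${\rm Conv}(K\lambda)=K\,{\rm Conv}(W\lambda)$, the determination of the exposed faces, and the verification that every face is exposed. The first identity is exactly Kostant's theorem recalled immediately before the statement, so I would simply invoke it. For the faces, I identify $\mathfrak k\cong\mathfrak k^*$ via $(|)$, so that every exposed face of ${\rm Conv}(K\lambda)$ has the form $F_\xi:=\{x\in{\rm Conv}(K\lambda):(x|\xi)=m_\xi\}$ with $m_\xi:=\max_x (x|\xi)$, for some $\xi\in\mathfrak k$. Because ${\rm Conv}(K\lambda)$ is $K$-stable, $F_{g\xi}=gF_\xi$, so it suffices to treat $\xi$ in the closed chamber $\overline{\mathfrak t_+}$; then $\hat\Pi:=\Pi^\xi=\{\alpha\in\Pi:(\alpha|\xi)=0\}$ records the walls through $\xi$, and $K_\xi=K_{(\hat\Pi)}$.

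First I would compute $F_\xi$ for such $\xi$. Since $\xi\in\mathfrak t$, for $\eta=k\lambda$ one has $(\eta|\xi)=(\iota_T^*\eta|\xi)$ with $\iota_T^*\eta\in{\rm Conv}(W\lambda)$, whence $m_\xi=\max_{w\in W}(w\lambda|\xi)=(\lambda|\xi)$, the maximum of a pairing of dominant elements being attained at $w=e$. To identify the maximisers on the orbit itself, I would note that any $\eta\in K\lambda$ with $(\eta|\xi)=m_\xi$ is a critical point of $(\cdot|\xi)$ on $K\lambda$; by invariance of the form $df_\eta([X,\eta])=(X|[\eta,\xi])$, so criticality means $[\eta,\xi]=0$, i.e.\ $\eta\in\mathfrak k_\xi$. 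Conjugating such $\eta$ into $\mathfrak t$ by an element of $K_\xi$ (which fixes $\xi$) yields some $w\lambda\in W\lambda$ with $(w\lambda|\xi)=m_\xi$, forcing $w\in W_{(\hat\Pi)}$ by the analogous statement for the weight polytope ${\rm Conv}(W\lambda)$, and tracing back gives $\eta\in K_{(\hat\Pi)}\lambda$. The reverse inclusion $K_{(\hat\Pi)}\lambda\subseteq F_\xi$ is immediate since $K_{(\hat\Pi)}$ fixes $\xi$. As the extreme points of ${\rm Conv}(K\lambda)$ lie in $K\lambda$, this gives $F_\xi={\rm Conv}(K_{(\hat\Pi)}\lambda)$; the companion formula ${\rm Conv}(gK_{(\hat\Pi)}\lambda)=gK_{(\hat\Pi)}{\rm Conv}(W_{(\hat\Pi)}\lambda)$ then follows by applying Kostant's theorem to the reductive group $K_{(\hat\Pi)}$ (its central torus acting trivially, so one reduces to the derived subgroup plus a fixed translation) and translating by $g$. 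Thus each exposed face has the asserted form and, conversely, each set of the asserted form is a $K$-translate of some $F_\xi$, hence exposed.

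The main obstacle is to prove that \emph{every} face is exposed, i.e.\ that the list is complete; since ``exposed face of an exposed face'' is not transitive in general, this needs care. I would argue by induction on the semisimple rank. Let $F\subsetneq{\rm Conv}(K\lambda)$ be a face and let $\hat F$ be the smallest exposed face containing it (well defined: taking $\xi_0$ in the relative interior of the normal cone of $F$ gives $\hat F=F_{\xi_0}$, since $F_{\xi+\xi'}=F_\xi\cap F_{\xi'}$ whenever the right side is nonempty). By the classification $\hat F=g\,{\rm Conv}(K_{(\hat\Pi)}\lambda)$ with $\hat\Pi\subsetneq\Pi$, and after translating by $g^{-1}$ we may regard $F$ as a face of the smaller orbitope ${\rm Conv}(K_{(\hat\Pi)}\lambda)$. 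If $F$ were a proper face there, a supporting hyperplane at a relative-interior point of $F$ would place $F$ inside a proper exposed face ${\rm Conv}(hK_{(\Pi'')}\lambda)$ of that smaller orbitope, with $h\in K_{(\hat\Pi)}$ and $\Pi''\subsetneq\hat\Pi$ (using the inductive hypothesis for $K_{(\hat\Pi)}$).

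The point that sidesteps non-transitivity is that such a face is a $K$-translate of the \emph{standard} exposed face ${\rm Conv}(K_{(\Pi'')}\lambda)$ of the full orbitope: since $\Pi''\subseteq\Pi$ and $gh\in K$, the set ${\rm Conv}(ghK_{(\Pi'')}\lambda)$ is exposed in ${\rm Conv}(K\lambda)$ itself, contains $F$, and is strictly smaller than $\hat F$. This contradicts the minimality of $\hat F$, so $F=\hat F$ is exposed and of the claimed form. The base case is the torus situation $\hat\Pi=\emptyset$, where $K_{(\emptyset)}=T$ and ${\rm Conv}(T\lambda)=\{\lambda\}$ is a vertex; and the case $\hat\Pi=\Pi$ (where $N_F=\{0\}$) forces $F={\rm Conv}(K\lambda)$, consistent with the classification.
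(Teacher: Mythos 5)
The paper does not prove this statement at all: it is quoted verbatim from Biliotti--Ghigi--Heinzner \cite{Biliotti-Ghigi-Heinzner} as a background result, so there is no internal proof to compare against. Judged on its own, your reconstruction is essentially correct and complete in outline. The identification of the exposed faces is sound: reducing the exposing functional $\xi$ to $\mathfrak t_+$ by equivariance, characterizing maximizers of $(\cdot\,|\xi)$ on $K\lambda$ as critical points via $([X,\eta]|\xi)=(X|[\eta,\xi])$, conjugating into $\mathfrak t$ by $K_\xi$, and concluding $F_\xi={\rm Conv}(K_{(\hat\Pi)}\lambda)$ from the fact that extreme points lie on the orbit --- all of this works. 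The one imported ingredient you should acknowledge explicitly is the rigidity lemma for the weight polytope (that $(w\lambda|\xi)=(\lambda|\xi)$ with $\lambda,\xi$ dominant forces $w\lambda\in W_{(\hat\Pi)}\lambda$, proved by writing $\lambda-w\lambda$ as a nonnegative combination of simple roots and pairing with $\xi$, plus a short argument that $\lambda-w\lambda\in{\rm span}(\hat\Pi)$ implies $w\lambda\in W_{(\hat\Pi)}\lambda$); as stated, ``$w\in W_{(\hat\Pi)}$'' is slightly too strong, though your conclusion $\eta\in K_{(\hat\Pi)}\lambda$ is the right one. Your treatment of the hard part --- exposedness of all faces --- via the smallest exposed face $\hat F=F_{\xi_0}$ containing $F$ (with $\xi_0$ in the relative interior of the normal cone), induction on semisimple rank applied to the reductive group $K_{(\hat\Pi)}$ after splitting off the central translation, and the observation that the inductively produced proper face ${\rm Conv}(ghK_{(\Pi'')}\lambda)$ is again a \emph{standard} exposed face of the full orbitope (which kills the non-transitivity issue and contradicts minimality of $\hat F$) is a clean and valid convex-geometric argument; it relies on the standard facts $F_{\xi+\xi'}=F_\xi\cap F_{\xi'}$ when nonempty and $(K_{(\hat\Pi)})_{(\Pi'')}=K_{(\Pi'')}$ for $\Pi''\subseteq\hat\Pi$, both of which hold. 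This is more elementary in flavour than the momentum-map/gradient-flow techniques of the cited source, but it establishes the same statement.
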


With this preparation, we now turn our attention to the partial convex hulls and begin with the following elementary observation. Recall that $\ell=\dim \mathfrak t$ denotes the rank of $\mathfrak k$.

\begin{lemma}\label{Lemma ConvX is Cellplus1}
For any $\lambda\in\mathfrak t_+$ we have ${\bf r}(\lambda)\leq \ell+1$, i.e., $C_{\ell+1}(K\lambda) = {\rm Conv}(K\lambda)$.
\end{lemma}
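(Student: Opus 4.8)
The plan is to reduce the statement to Carath\'eodory's theorem applied inside the maximal abelian subalgebra $\mathfrak t$, which has dimension $\ell$, rather than in the ambient space $\mathfrak k$; this is precisely what produces the sharp bound $\ell+1$ instead of the far weaker $\dim\mathfrak k + 1$ that a direct application in $\mathfrak k$ would give. The inclusion $C_{\ell+1}(K\lambda)\subseteq{\rm Conv}(K\lambda)$ is immediate from the definition \eqref{For CrX}, since every convex combination of points of $K\lambda$ lies in ${\rm Conv}(K\lambda)$; so only the reverse inclusion requires an argument.

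For the reverse inclusion I would invoke Kostant's theorem, recalled just before Theorem \ref{Theo BGH ConvKlambda}, in the form ${\rm Conv}(K\lambda) = K\,{\rm Conv}(W\lambda)$, together with ${\rm Conv}(W\lambda) = {\rm Conv}(K\lambda)\cap\mathfrak t$. The essential gain is that ${\rm Conv}(W\lambda)$ is a polytope lying in the $\ell$-dimensional space $\mathfrak t$. Given an arbitrary $v\in{\rm Conv}(K\lambda)$, write $v = g\cdot w$ with $g\in K$ and $w\in{\rm Conv}(W\lambda)$. Since $w$ belongs to the convex hull of the finite set $W\lambda$ inside $\mathfrak t$, Carath\'eodory's theorem expresses it as a convex combination $w = \sum_{i=1}^{\ell+1} t_i\,\sigma_i\lambda$ of at most $\ell+1$ points $\sigma_i\lambda\in W\lambda$, with $\sigma_i\in W$, $t_i\geq 0$ and $\sum_i t_i = 1$. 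Because $W\lambda = K\lambda\cap\mathfrak t\subseteq K\lambda$, each $\sigma_i\lambda$ equals $k_i\lambda$ for some $k_i\in K$. Finally, using that the adjoint action of $K$ on $\mathfrak k$ is linear and hence preserves convex combinations, I obtain $v = g\cdot w = \sum_{i=1}^{\ell+1} t_i\,(gk_i)\lambda$, a convex combination of at most $\ell+1$ points of the single orbit $K\lambda$. Thus $v\in C_{\ell+1}(K\lambda)$, which gives ${\rm Conv}(K\lambda)\subseteq C_{\ell+1}(K\lambda)$ and completes the proof.

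I do not anticipate a serious obstacle here: the argument is Carath\'eodory combined with the $K$-invariance of the orbit. The only steps demanding care are that Carath\'eodory must be applied in $\mathfrak t$, of dimension $\ell$, and not in $\mathfrak k$, and that the transported vertices $gk_i\lambda$ remain in $K\lambda$---which is exactly the $K$-invariance of the orbit under the linear adjoint action.
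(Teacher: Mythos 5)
Your proof is correct and follows essentially the same route as the paper: both reduce via Kostant's theorem ${\rm Conv}(K\lambda)=K\,{\rm Conv}(W\lambda)$ to a statement about the polytope ${\rm Conv}(W\lambda)$ inside the $\ell$-dimensional space $\mathfrak t$, then transport the resulting convex combination back along the linear adjoint action. The only difference is that the paper proves the required instance of Carath\'eodory's theorem for the finite set $W\lambda$ from scratch (Lemma \ref{Lemma Polytopes}, by induction on the number of points), whereas you cite the classical theorem directly; the mathematical content is identical.
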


\begin{proof}
Since ${\rm Conv}(K\lambda)=K{\rm Conv}(W\lambda)$, it suffices to show that $C_{\ell+1}(W\lambda)={\rm Conv}(W\lambda)$.

\begin{lemma}\label{Lemma Polytopes}
Let $S=\{x_1,...,x_n\}\in E$ be a finite set of points in a real vector space $E$, such that the affine hull of $S$ is the entire $E$, and let $m=\dim E$. Then $C_{m+1}(S)={\rm Conv}(S)$.
\end{lemma}

\begin{proof}
We may assume, without loss of generality, that $S$ is the set of extreme points of its convex hull $P={\rm Conv}(S)$. We have to show that every point $x\in P$ belongs to some simplex of the form ${\rm Conv}\{x_{j_1},...,x_{j_{m+1}}\}$. We shall proceed by induction on $n$. For $n=1$ the statement is trivially true. Assume that it holds for sets of cardinality $n-1$ or less. Let $F$ be any facet of $P$. Then the induction hypothesis implies $F=C_{m}(F\cap S)$. Hence, if $x_j\notin F$, then ${\rm Conv}(F\cup\{x_j\})= C_{m+1}(\{x_j\}\cup(F\cap S))$. Now let us fix one of the points, say $x_1$. Observe that $P$ is equal to the union of segments connecting $x_1$ to the boundary of $P$. Furthermore, it suffices to consider on the the facet which do not contain the fixed point. Let $F_1,...,F_k$ be the facets of $P$ which do not contain $x_1$. We have
\begin{align*}
P& ={\rm Conv}(\{x_1\}\cup F_1\cup ... \cup F_k) = \bigcup\limits_{j=1}^k {\rm Conv}(\{x_1\}\cup F_j)\\
 & = \bigcup\limits_{j=1}^k C_{m+1}(\{x_1\}\cup (F_j\cap S))\subset C_{m+1}(S) \;. 
\end{align*}
This completes the proof of Lemma \ref{Lemma Polytopes}.
\end{proof}

We apply this result to obtain
$$
{\rm Conv}(K\lambda) = K{\rm Conv}(W\lambda) = K C_{\ell+1}(W\lambda) \subset C_{\ell+1}(K\lambda) \;,
$$
and this completes the proof of Lemma \ref{Lemma ConvX is Cellplus1}.
\end{proof}

It is easy to see that $r(\lambda)=1$ if and only if ${\bf r}_0(\lambda)=1$, if and only if $\lambda=0$.

\begin{rem}\label{Rem r0 is 2}
Let $w_0\in W$ denote the longest Weyl group element, characterised by $w_0(t_+)=-\mathfrak t_+$. For $\lambda\in\mathfrak t_+$, the dominant weight $\lambda^*=-w_0\lambda$ is called the dual weight to $\lambda$, since for $\lambda\in\Lambda^+$ we have $V_{\lambda^*}\cong V_\lambda^*$. In general, $K(\lambda^*)=K(-\lambda)$ holds and implies
$$\lambda=\lambda^* \tst {\bf r}_0(\lambda)=2 \;.$$
Recall that for the simple groups of type ${\rm B}_\ell,{\rm C}_\ell,{\rm D}_{2m},{\rm E}_7,{\rm E}_8,{\rm F}_4,{\rm G}_2$ one has $w_0=-1$. Hence, in these cases, ${\bf r}_0(\lambda)=2$ for all $\lambda\in\mathfrak t_+$. Thus the behaviour of ${\bf r}_0$ is nontrivial only if $K$ has simple factors of type ${\rm A}_m, {\rm D}_{2m+1}, {\rm E}_6$.
\end{rem}

In the examples below we compute the value of ${\bf r}_0$ for all fundamental weights of classical simple groups, which are not self dual, i.e., the spin-representations of $Spin_{4\ell}$ with odd $\ell$ and the fundamental representations of $SU_{\ell+1}$.

\begin{example}\label{Exa r0 fundweights SUn}
Let $K=SU_n$ and let $\varpi_1,...,\varpi_{n-1}$ be the fundamental weights of $K$ in their standard order ($\ell=n-1$). We have
$$
{\bf r}_0(\varpi_1)=n \quad,\quad {\bf r}_0(\varpi_2)=\begin{cases}\frac{n}{2} ,\; for\; n\; even\\ \frac{n+3}{2} ,\; for\; n\; odd \end{cases} \;.
$$
If $n=jk$ with $j\leq n/2$, we have ${\bf r}_0(\varpi_j)=k$.

If $n=jq+p$ with $p\in\{0,...,j-1\}$, then one can show that ${\bf r}_0(\varpi_{j,SU_n}) = q+{\bf r}_0(\varpi_{p,SU_j})$, which by induction yields
$$
{\bf r}_0(\varpi_j) = \sum\limits_{k=1}^m q_k \;,
$$
where $m$ is the number of steps in the Euclidean algorithm for $n,j$ (at the last step we get rest 0) and $q_k$ is the integral part of the $k$-th quotient. For instance, for $j=3$, we obtain
$$
{\bf r}_0(\varpi_3) = \left\lceil \frac{n}{3} \right\rceil = \begin{cases} q \quad,\; if\; n=3q  \\ q+3 \quad,\; if\; n=3q+1\; or \; 3q+2  \end{cases} \;.
$$
\end{example}

\begin{example}\label{Exa r0 fundweights Spin2ell}
Let $K=Spin_{2\ell}$ with $\ell$ odd, $\ell\geq3$. Then
$$
{\bf r}_0(\varpi_{\ell-1})={\bf r}_0(\varpi_\ell)=4 \;.
$$
To see this, observe that the weights of the spin-representation are of the form $(\pm\frac12,\pm\frac12,...,\pm\frac12)$ with an even number of minus signs. For $\ell=3$ a minimal set $M\subset W\lambda$ whose convex hull contains $0$ is
$$
M=\{(\frac12,\frac12,\frac12),(-\frac12,-\frac12,\frac12),(-\frac12,\frac12,-\frac12),(\frac12,-\frac12,-\frac12)\} \;.
$$
For larger $\ell$ we just extend the above weights in a trivial way, two times with positive signs and two times with negative, in order to get zero as a sum. 
\end{example}



For any subset $\hat\Pi\subset\Pi$ we denote by $k'_{(\hat\Pi)}$ the semisimple part (i.e., derived subalgebra) of $\mathfrak k_{(\hat\Pi)}$. Then $\mathfrak t_{(\hat\Pi)}=\mathfrak t\cap k'_{(\hat\Pi)}={\rm span}\{\alpha^\vee:\alpha\in\hat\Pi\}$ is a maximal abelian subalgebra of $k'_{(\hat\Pi)}$. For $\lambda\in\mathfrak t_+$ we denote by $\lambda_{\vert\hat\Pi}$ is the projection of $\lambda$ to $\mathfrak t_{(\hat\Pi)}$, which defines a dominant weight of $\mathfrak k'_{(\hat\Pi)}$.

\begin{theorem}\label{Theo rlambda is max r0lambdahatPi}
For $\lambda\in\mathfrak t_+$, the following holds:
$$
{\bf r}(\lambda)=\max\{{\bf r}_0(\lambda_{\vert\hat\Pi}):\hat\Pi\subset\Pi\}\;.
$$
\end{theorem}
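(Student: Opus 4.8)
The plan is to prove the two inequalities separately, using the Biliotti--Ghigi--Heinzner face description (Theorem \ref{Theo BGH ConvKlambda}) together with induction on the semisimple rank $|\Pi|$. Write $r^\ast:=\max\{{\bf r}_0(\lambda_{\vert\hat\Pi}):\hat\Pi\subseteq\Pi\}$, and for a fixed $\hat\Pi$ decompose $\lambda=\lambda_z+\lambda_{\vert\hat\Pi}$ with $\lambda_z\in\mathfrak z_{(\hat\Pi)}$ and $\lambda_{\vert\hat\Pi}\in\mathfrak t_{(\hat\Pi)}$. Since $K_{(\hat\Pi)}$ fixes $\mathfrak z_{(\hat\Pi)}$ pointwise, one has $K_{(\hat\Pi)}\lambda=\lambda_z+K'_{(\hat\Pi)}\lambda_{\vert\hat\Pi}$, hence $F_{\hat\Pi}:={\rm Conv}(K_{(\hat\Pi)}\lambda)=\lambda_z+{\rm Conv}(K'_{(\hat\Pi)}\lambda_{\vert\hat\Pi})$ is a face of ${\rm Conv}(K\lambda)$ by Theorem \ref{Theo BGH ConvKlambda}, and $\lambda_z$ (the image of $0$) lies in ${\rm relint}(F_{\hat\Pi})$ because $K'_{(\hat\Pi)}$ is semisimple. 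For the lower bound ${\bf r}(\lambda)\geq r^\ast$ I would evaluate at $\lambda_z$: as $F_{\hat\Pi}$ is a face and $\lambda_z\in{\rm relint}(F_{\hat\Pi})$, any convex representation $\lambda_z=\sum_{i=1}^m c_iq_i$ with $q_i\in K\lambda$ forces each $q_i$ into $F_{\hat\Pi}\cap K\lambda=K_{(\hat\Pi)}\lambda$ (the vertices of ${\rm Conv}(K\lambda)$ are exactly $K\lambda$, corresponding to $\hat\Pi=\emptyset$ in Theorem \ref{Theo BGH ConvKlambda}); subtracting $\lambda_z$ gives $0=\sum_i c_i(q_i-\lambda_z)$ with $q_i-\lambda_z\in K'_{(\hat\Pi)}\lambda_{\vert\hat\Pi}$, so $m\geq{\bf r}_0(\lambda_{\vert\hat\Pi})$. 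Taking $m\leq{\bf r}(\lambda)$ (valid since $\lambda_z\in{\rm Conv}(K\lambda)=C_{{\bf r}(\lambda)}(K\lambda)$) and maximising over $\hat\Pi$ gives ${\bf r}(\lambda)\geq r^\ast$.

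For the upper bound ${\bf r}(\lambda)\leq r^\ast$ I must show every $x\in{\rm Conv}(K\lambda)$ lies in $C_{r^\ast}(K\lambda)$, splitting according to the minimal face containing $x$. If $x$ lies in a proper face, then using $K$-equivariance of $C_{r^\ast}$ I may assume $x\in F_{\hat\Pi}$ with $\hat\Pi\subsetneq\Pi$, so $x=\lambda_z+x'$ with $x'\in{\rm Conv}(K'_{(\hat\Pi)}\lambda_{\vert\hat\Pi})$. The group $K'_{(\hat\Pi)}$ has simple roots $\hat\Pi$, of strictly smaller cardinality, and transitivity of orthogonal projection gives $(\lambda_{\vert\hat\Pi})_{\vert\hat\Pi'}=\lambda_{\vert\hat\Pi'}$ for $\hat\Pi'\subseteq\hat\Pi$; so the induction hypothesis yields ${\bf r}(\lambda_{\vert\hat\Pi})=\max_{\hat\Pi'\subseteq\hat\Pi}{\bf r}_0(\lambda_{\vert\hat\Pi'})\leq r^\ast$. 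Thus $x'$ is a convex combination of at most $r^\ast$ points of $K'_{(\hat\Pi)}\lambda_{\vert\hat\Pi}$, and adding $\lambda_z$ back exhibits $x$ as a convex combination of at most $r^\ast$ points of $K_{(\hat\Pi)}\lambda\subseteq K\lambda$. Hence the entire boundary of ${\rm Conv}(K\lambda)$ is covered.

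The remaining and main obstacle is the relative interior of ${\rm Conv}(K\lambda)$ itself, which corresponds to $\hat\Pi=\Pi$ and cannot be reduced to a smaller group. Here the naive bound ${\bf r}_0(\lambda)$ does not suffice: if $x_n\to\lambda_z$ for a ``heavy'' face $F_{\hat\Pi}$, then a limit of representations of the $x_n$ by fewer than ${\bf r}_0(\lambda_{\vert\hat\Pi})$ orbit points would, by the face property, represent $\lambda_z$ inside $F_{\hat\Pi}$, a contradiction; so interior points near $\lambda_z$ genuinely require close to ${\bf r}_0(\lambda_{\vert\hat\Pi})$ points, and the full $r^\ast$ is needed. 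My plan for the interior is to realise each interior point $x$ on a radial segment $(0,b]$, with $b$ a boundary point on some $F_{\hat\Pi}$: starting from the at most $r^\ast$ face points representing $b$, I push them off $F_{\hat\Pi}$ along $K\lambda$ (the orbit curves strictly inward from its supporting hyperplane at $b$) and use an implicit function / submersion argument to show the resulting convex combinations fill a full-dimensional inward neighbourhood; combined with the central representation coming from $0\in C_{{\bf r}_0(\lambda)}(K\lambda)$ and a connectedness argument along the segment, this should cover all of $(0,b]$ with at most $r^\ast$ points. Checking that the differential of the evaluation map is indeed onto the inward directions at the relevant configurations, and globalising these local coverings to the whole open interior, is precisely where the difficulty lies.

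As a consistency check one can also bypass the interior analysis: combining the equivalence ${\rm (i)}\Leftrightarrow{\rm (iii)}$ of Theorem \ref{Theo Theo Er cones} with Theorem \ref{Theo Cr cones} applied to each derived centraliser $K'_\xi$ gives ${\bf r}(\lambda)\leq r\Leftrightarrow\forall\xi\,(\lambda_{\vert\mathfrak k'_\xi}\in\mathcal{CLR}_r(K'_\xi))\Leftrightarrow\forall\xi\,({\bf r}_0(\lambda_{\vert\mathfrak k'_\xi})\leq r)$, which is exactly $r\geq r^\ast$ once $\xi$ is translated into $\hat\Pi=\{\alpha\in\Pi:(\alpha\vert\xi)=0\}$ with $\mathfrak k'_\xi=\mathfrak k'_{(\hat\Pi)}$ and $\lambda_{\vert\mathfrak k'_\xi}=\lambda_{\vert\hat\Pi}$. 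I would invoke this shortcut only if Theorem \ref{Theo Theo Er cones} is established independently of the present statement, since otherwise the two arguments are interdependent; the direct geometric route above, whose crux is the interior estimate, is the one I regard as the genuine proof.
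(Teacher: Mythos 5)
Your lower bound ${\bf r}(\lambda)\geq\max_{\hat\Pi}{\bf r}_0(\lambda_{\vert\hat\Pi})$ is correct and is essentially the paper's argument: the point you call $\lambda_z$ is the paper's $\nu_{\lambda,\hat\Pi}$, and the face property of ${\rm Conv}(K_{(\hat\Pi)}\lambda)$ forces any convex representation of it to use only points of $K_{(\hat\Pi)}\lambda$. Your treatment of boundary points of the orbitope by induction on $|\Pi|$ is also sound. But the interior case, which you yourself identify as the crux, is not proved: the proposed ``push the face configuration inward and apply an implicit function / submersion argument'' is only a plan, and it is a delicate one --- you would need surjectivity of the differential of $(g_1,\dots,g_r,c)\mapsto\sum c_ig_i\lambda$ at highly degenerate configurations (all $g_i\lambda$ lying in a single proper face), plus a genuine globalisation step along each radial segment. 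As written, the proposal establishes only one of the two inequalities. Your fallback via Theorem \ref{Theo Theo Er cones} is indeed circular, as you suspect: the paper derives that theorem from the present one.

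The missing idea that makes the interior analysis unnecessary is Kirwan convexity applied to the $r$-fold sum. The set $A_r=\frac1r(K\lambda+\cdots+K\lambda)$ is the image of the momentum map $\mu^{\iota_r(\lambda)}$ for the diagonal $K$-action on $(K\lambda)^{\times r}$, so $A_r\cap\mathfrak t_+$ is automatically a convex polytope; moreover $A_r\subset C_r(K\lambda)$. The points $\nu_{\lambda,\hat\Pi}$, as $\hat\Pi$ ranges over subsets of $\Pi$, are exactly the extreme points of $\mathfrak t_+\cap{\rm Conv}(K\lambda)$, and Theorem \ref{Theo Cr cones} applied to each semisimple group $K'_{(\hat\Pi)}$ shows that $\nu_{\lambda,\hat\Pi}\in A_r$ precisely when $r\geq{\bf r}_0(\lambda_{\vert\hat\Pi})$. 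Hence for $r=\max_{\hat\Pi}{\bf r}_0(\lambda_{\vert\hat\Pi})$ the convex set $A_r\cap\mathfrak t_+$ contains all extreme points of $\mathfrak t_+\cap{\rm Conv}(K\lambda)$, therefore contains the whole polytope, and by $K$-equivariance $A_r={\rm Conv}(K\lambda)\subset C_r(K\lambda)$. This covers interior and boundary in one stroke and closes the gap your submersion argument was meant to fill.
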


\begin{proof}
Recall that the center of $\mathfrak k_{(\hat\Pi)}$ is given by $\mathfrak z_{(\hat\Pi)}=\cap_{\alpha\in\hat\Pi}{\rm ker}\alpha$. Since $\lambda\in\mathfrak t\subset \mathfrak k_{(\hat\Pi)}\subset\mathfrak k$ the orbit $K_{(\hat\Pi)}\lambda$ can be seen as a coadjoint $K_{(\hat\Pi)}$-orbit. In particular, the semisimple part $K'_{(\hat\Pi)}$ acts transitively on it, and the intersection of ${\rm Conv}(K_{(\hat\Pi)}\lambda)$ with $\mathfrak z_{(\hat\Pi)}$ is a single point which we denote by $\nu_{\lambda,\hat\Pi}$, so that
$$
\{\nu_{\lambda,\hat\Pi}\}=\mathfrak z_{(\hat\Pi)}\cap{\rm Conv}(W_{(\hat\Pi)}\lambda)=\mathfrak z_{(\hat\Pi)}\cap{\rm Conv}(K_{(\hat\Pi)}\lambda) \;.
$$
Furthermore, $\nu_{\lambda,\hat\Pi}\in\mathfrak t_+$, i.e. it is dominant, and the set $E(\lambda):=\{\nu_{\lambda,\hat\Pi}:\hat\Pi\subset\Pi\}$ is exactly the extreme points of the polytope $\mathfrak t_+\cap{\rm Conv}(W\lambda)$.

For any subset $\hat\Pi\subset\Pi$, we have
$$
{\bf r}_0(\lambda_{\vert\hat\Pi})=\min\{r\in\NN:\nu_{\lambda,\hat\Pi}\in C_r(K_{(\hat\Pi)}\lambda)\}=\min\{r\in\NN:\nu_{\lambda,\hat\Pi}\in C_r(K\lambda)\},
$$
where the second equality holds, since, due to Theorem \ref{Theo BGH ConvKlambda}, ${\rm Conv}(K_{(\hat\Pi)}\lambda)$ is an exposed face of ${\rm Conv}(K\lambda)$ containing $\nu_{\lambda,\hat\Pi}$. Hence
$$
\max\{{\bf r}_0(\lambda_{\vert\hat\Pi}):\hat\Pi\subset\Pi\} = \min\{r\in\NN: E(\lambda)\subset C_r(K\lambda)\} \leq {\bf r}(\lambda) \;.
$$

On the other hand, recall the momentum map
$$
\mu^{\iota_r(\lambda)}: (K\lambda)^{\times r} \to \mathfrak k, \; (x_1,...,x_r)\mapsto\frac1r(x_1+...+x_r) \;,
$$
whose image $\mu^{\iota_r(\lambda)}((K\lambda)^{\times r})=\frac1r(K\lambda+...+K\lambda)$ is contained in $C_r(K\lambda)$. Since the intersection of the momentum image with the Weyl chamber $\mathfrak t_+$ is convex, to prove the theorem it is sufficient to show that $E(\lambda)\subset \mu^{\iota_r(\lambda)}((K\lambda)^{\times r})$ for $r=\max\{{\bf r}_0(\lambda_{\vert\hat\Pi}):\hat\Pi\subset\Pi\}$. From Theorem \ref{Theo Cr cones} we know that $r\geq{\bf r}_0(\lambda)$ is equivalent to $0\in \frac1r(K\lambda+...+K_\lambda)$. Applying this to each $K_{(\hat\Pi)}\lambda_{\vert\hat\Pi}$ we get $r\geq{\bf r}_0(\lambda_{\vert\hat\Pi})$ if and only if $\nu_{\lambda,\hat\Pi}\in \frac1r(K_{(\hat\Pi)}\lambda_{\vert\hat\Pi}+...+K_{(\hat\Pi)}\lambda_{\vert\hat\Pi})$, which yields the desired result.
\end{proof}

Now we are ready to prove Theorem \ref{Theo Theo Er cones} stated in the Introduction.

\begin{proof}[of Theorem \ref{Theo Theo Er cones}]
We shall first show the equivalence of the three conditions. Note that condition (iii) of the theorem can be rewritten with the notation introduced in this section as: for every subset $\hat\Pi\subset\Pi$, $\lambda_{\vert\hat\Pi}\in\mathcal{CLR}_{r}(K'_{(\hat\Pi)})$. From Theorem \ref{Theo Cr cones} we know that $\lambda_{\vert\hat\Pi}\in\mathcal{CLR}_{r}(K'_{(\hat\Pi)})$ is equivalent to $r\geq{\bf r}_0(\lambda_{\vert\hat\Pi})$. Thus condition (iii) is equivalent to $r\geq {\bf r}_0(\lambda_{\vert\hat\Pi})$ for all $\hat\Pi\subset\Pi$, which is in turn equivalent to $r\geq{\bf r}(\lambda)$ due to Theorem \ref{Theo rlambda is max r0lambdahatPi}. Therefore conditions (i) and (iii) are equivalent.

The fact that (ii) implies (i) follows immediately from the observation that the rescaled $r$-fold sum $\frac1r(K\lambda+...+K\lambda)$ is contained in $C_r(K\lambda)$ and contains $K\lambda$, hence if the sum is convex then so is the partial convex hull and we must have $r\geq {\bf r}(\lambda)$.

We suppose now that (i) (and hence (iii)) holds and shall verify (ii), or more precisely, that the normalized $r$-fold sum $A_r=\frac1r(K\lambda+...+K\lambda)$ is convex. By Theorem \ref{Theo Cr cones}, $0\in C_r(K\lambda)$ is equivalent to $0\in A_r$. Similarly, from the proof of Theorem \ref{Theo rlambda is max r0lambdahatPi} and with the notation introduced therein, we deduce that $\nu_{\lambda,\hat\Pi}\in C_r(K\lambda)$ is equivalent to $\nu_{\lambda,\hat\Pi}\in C_r(K_{(\hat\Pi)}\lambda)$, which is in turn equivalent to $\nu_{\lambda,\hat\Pi}\in \frac1r(K_{(\hat\Pi)}\lambda+...+K_{(\hat\Pi)}\lambda)\subset A_r$. Thus (i) implies $\nu_{\lambda,\hat\Pi}\in A_r$ for all $\hat\Pi\subset\Pi$. The convex hull of these points is equal to $\mathfrak t_+\cap{\rm Conv}(K\lambda)$, and $A_r$ must contain this convex hull, because $A_r$ is equal to the momentum image $\mu^{\iota_r(\lambda)}(K\lambda^{\times r})$ and hence its intersection with the Weyl chamber is a convex polytope. Thus $A_r={\rm Conv}(K\lambda)$ and property (ii) holds.

It remains to show that $\mathfrak C_r$ is a rational convex polyhedral cone. It is not difficult to see that, for any $\hat\Pi\subset\Pi$, the set
$$
\mathfrak B_{r,\hat\Pi} = \{(\lambda_1,...,\lambda_r)\in(\mathfrak t_+)^{\times r}:((\lambda_1)_{\vert\hat\Pi},...,(\lambda_r)_{\vert\hat\Pi})\in\mathcal{CLR}_r(K'_{(\hat\Pi)})\}
$$
is a rational polyhedral convex cone described analogously to the Littlewood-Richardson cone. Now the equivalence between (i) and (iii) yields
$$
\mathfrak C_r=\iota_r(\mathfrak t_+)\cap\left(\bigcap\limits_{\hat\Pi\subset\Pi} \mathfrak B_{r,\hat\Pi} \right)
$$
and we can deduce the claimed properties of $\mathfrak C_r$.
\end{proof}

As a special case, we have the following.

\begin{coro}
For $\lambda\in\mathfrak t_+\setminus\{0\}$ the following are equivalent:
\begin{enumerate}
\item[{\rm (i)}] ${\bf r}(\lambda)=2$;
\item[{\rm (ii)}] $K\lambda+K\lambda$ is convex;
\item[{\rm (iii)}] for any subset $\hat\Pi\subset\Pi$, ${\bf r}_0(\lambda_{\vert\hat\Pi})=2$, i.e., $K'_{(\hat\Pi)}\lambda_{\vert\hat\Pi}\ni-\lambda_{\vert\hat\Pi}$;
\item[{\rm (iv)}] For any connected component $\Pi_1$ of (the Dynkin diagram of) $\Pi$ and any $\alpha,\beta\in\Pi_1$ such that $||\alpha||=||\beta||$, the equality $(\lambda|\alpha)=(\lambda|\beta)$ holds.
\end{enumerate}
\end{coro}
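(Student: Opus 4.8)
The plan is to read the equivalence of (i), (ii) and (iii) off Theorem~\ref{Theo Theo Er cones} with $r=2$, and then to prove the genuinely new equivalence (iii)$\Leftrightarrow$(iv) by a combinatorial analysis of the diagram automorphism $-w_0$ on sub-diagrams. First I would dispose of (i)$\Leftrightarrow$(ii)$\Leftrightarrow$(iii). Since $\lambda\neq 0$ we have ${\bf r}(\lambda)\geq 2$ (recall ${\bf r}(\lambda)=1$ forces $\lambda=0$), so condition (i), ${\bf r}(\lambda)=2$, coincides with ${\bf r}(\lambda)\leq 2$, which is condition (i) of Theorem~\ref{Theo Theo Er cones} for $r=2$; condition (ii) is literally condition (ii) there. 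Condition (iii) of Theorem~\ref{Theo Theo Er cones}, as reformulated in its proof, reads $\lambda_{\vert\hat\Pi}\in\mathcal{CLR}_2(K'_{(\hat\Pi)})$ for all $\hat\Pi\subset\Pi$, and by Theorem~\ref{Theo Cr cones} this is exactly ${\bf r}_0(\lambda_{\vert\hat\Pi})\leq 2$, i.e. $-\lambda_{\vert\hat\Pi}\in K'_{(\hat\Pi)}\lambda_{\vert\hat\Pi}$ — the content of (iii) here (the value is $2$ precisely when $\lambda_{\vert\hat\Pi}\neq 0$, and equals $1$ otherwise, the degenerate case $\lambda_{\vert\hat\Pi}=0$ being harmless). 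Thus (i), (ii), (iii) are equivalent directly from Theorem~\ref{Theo Theo Er cones}.

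Next I would reformulate self-duality combinatorially. By Remark~\ref{Rem r0 is 2}, for a dominant weight $\mu$ of a semisimple group the condition ${\bf r}_0(\mu)\leq 2$ is equivalent to $\mu=\mu^{*}$, and since $\mu^{*}=-w_0\mu$ this amounts to invariance of the coordinates $(\mu\vert\alpha^\vee)$ under the diagram automorphism $-w_0$. Applied to $K'_{(\hat\Pi)}$, and using $(\lambda_{\vert\hat\Pi}\vert\alpha^\vee)=(\lambda\vert\alpha^\vee)$ for $\alpha\in\hat\Pi$, condition (iii) becomes: for every $\hat\Pi\subset\Pi$ the labels $\bigl((\lambda\vert\alpha^\vee)\bigr)_{\alpha\in\hat\Pi}$ are invariant under $-w_0^{\hat\Pi}$, the longest-element automorphism of the sub-diagram $\hat\Pi$. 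The decisive elementary case is $\hat\Pi=\{\alpha,\beta\}$ of type $\mathrm A_2$, where $-w_0$ swaps the two nodes, so self-duality holds iff $(\lambda\vert\alpha^\vee)=(\lambda\vert\beta^\vee)$, equivalently (as $\|\alpha\|=\|\beta\|$) iff $(\lambda\vert\alpha)=(\lambda\vert\beta)$.

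For (iii)$\Rightarrow$(iv): any two simple roots joined by a single bond span a type-$\mathrm A_2$ subsystem and automatically have equal length, so (iii) forces $(\lambda\vert\alpha)=(\lambda\vert\beta)$ for every single-bonded pair. I would then invoke the elementary fact that in a connected Dynkin diagram the simple roots of a fixed length are joined to one another through single bonds by roots of that same length: a finite-type connected diagram is a tree carrying at most one multiple bond, and deleting that bond (when present) leaves two connected single-bonded pieces, each necessarily of a single length, the two lengths being distinct. Chaining the label equalities along such a path gives $(\lambda\vert\alpha)=(\lambda\vert\beta)$ for any two equal-length roots in a common component $\Pi_1$, which is (iv). Conversely, for (iv)$\Rightarrow$(iii) and arbitrary $\hat\Pi$, the automorphism $\sigma=-w_0^{\hat\Pi}$ is an isometry, hence preserves root lengths, and factors over the connected components of $\hat\Pi$, hence maps each to itself; thus for $\alpha\in\hat\Pi$ the root $\sigma(\alpha)$ has the same length as $\alpha$ and lies in the same component of $\Pi$, so (iv) yields $(\lambda\vert\alpha)=(\lambda\vert\sigma(\alpha))$, i.e. $(\lambda\vert\alpha^\vee)=(\lambda\vert\sigma(\alpha)^\vee)$. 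The labels are therefore $\sigma$-invariant, $\lambda_{\vert\hat\Pi}$ is self-dual, and (iii) follows.

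I expect the main obstacle to be purely the combinatorial connectivity statement for same-length roots, together with the bookkeeping that $-w_0^{\hat\Pi}$ is a length- and component-preserving diagram automorphism; once these are in place the $\mathrm A_2$-reduction and the chaining arguments are routine. The degenerate cases $\lambda_{\vert\hat\Pi}=0$ (in particular $\hat\Pi=\emptyset$) require no attention, since self-duality and ${\bf r}_0\leq 2$ hold there trivially.
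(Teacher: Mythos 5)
Your proposal is correct. Note that the paper itself offers no proof of this corollary --- it is stated as ``a special case'' immediately after Theorem~\ref{Theo Theo Er cones} --- so the only comparison available is with the derivation that statement implicitly presupposes: specializing Theorem~\ref{Theo Theo Er cones} to $r=2$ for (i)$\Leftrightarrow$(ii)$\Leftrightarrow$(iii) (which you do exactly as intended, correctly noting that $\lambda\neq 0$ upgrades ${\bf r}(\lambda)\leq 2$ to ${\bf r}(\lambda)=2$), and then translating (iii) via Remark~\ref{Rem r0 is 2} into self-duality of each $\lambda_{\vert\hat\Pi}$ under $-w_0^{\hat\Pi}$. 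Your treatment of (iii)$\Leftrightarrow$(iv) --- reduction to adjacent single-bonded $\mathrm A_2$ pairs, chaining along the single-bonded equal-length pieces of the (at most one multiple bond) diagram, and for the converse the observation that $-w_0^{\hat\Pi}$ is a length- and component-preserving diagram automorphism --- is sound and supplies precisely the details the paper omits. The only point worth making explicit in a final write-up is the one you already flag in passing: $(\lambda_{\vert\hat\Pi}\vert\alpha^\vee)=(\lambda\vert\alpha^\vee)$ for $\alpha\in\hat\Pi$ because $\alpha^\vee$ lies in the subspace onto which one projects orthogonally.
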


\begin{example}
Let $K$ be a simply connected classical group and $\varpi_1,...,\varpi_\ell$ be the fundamental weights of $K$ with the standard ordering of \cite{Bourbaki-Lie-2}. The values of ${\bf r}$ at the fundamental weights can be computed using Theorem \ref{Theo rlambda is max r0lambdahatPi} and the values of ${\bf r}_0$ at fundamental weights computed in Remark \ref{Rem r0 is 2} and Examples \ref{Exa r0 fundweights SUn}, \ref{Exa r0 fundweights Spin2ell}. Here we use the fact that the simple factors of centralizer subgroups in classical groups are again classical groups. We obtain the following.

For $K=SU_{\ell+1}$ and $1\leq j\leq \frac{\ell+1}{2}$ we have ${\bf r}(\varpi_j)={\bf r}(\varpi_{\ell+1-j})=(\ell+1)-(j-1)$.

For $K=Spin_{2\ell+1}$, or $Sp_{2\ell}$, we have ${\bf r}(\varpi_j)={\bf r}(\varpi_{\ell-j})=\ell-(j-1)$, for $1\leq j\leq \frac{\ell}{2}$, and ${\bf r}(\varpi_\ell)=2$.

For $K=Spin_{2\ell}$ we have ${\bf r}(\varpi_j)={\bf r}(\varpi_{\ell-j})=\ell-(j-1)$, for $1\leq j\leq \frac{\ell}{2}$, and ${\bf r}(\varpi_\ell)=\ell$.

We observe that ${\bf r}(\varpi_j)$ is equal to $1+\hat\ell_j$, where $\hat\ell_j$ is the length of the longest chain of type A in $\Pi$ emanating from $\alpha_j$. The number $\hat\ell_j$ is also equal to the dimension of the largest projective space $\PP$ equivariantly embedded in $K\varpi_j$ and mapped to a linear subspace in $\PP(V_{\omega_j})$ when $K\varpi_j$ is mapped (isomorphically) to the orbit $K[v_{\varpi_j}]$ of a highest weight line (see \cite{Lands-Mani-2003-ProjGeo}).
\end{example}

\begin{rem}
Lists of inequalities describing the cones $\mathfrak A_r$ and $\mathfrak C_r$ (see Theorems \ref{Theo Theo Cr cones} and \ref{Theo Theo Er cones}) can be deduced form the known lists of inequalities describing $\mathcal{CLR}_r$. A minimal list of inequalities for $\mathcal{CLR}_r$ was obtained by Belkale and Kumar in \cite{Belkale-Kumar}. The resulting inequalities for $\mathfrak A_r$ have the form
$$
(\lambda|w_1\varpi_j+...+w_r\varpi_j) \geq 0 \;,
$$
where $\varpi_j$ is a fundamental weight of $K$ and $w_1,...,w_r\in W$ satisfy the so-called Belkale-Kumar condition (cf. \cite{Belkale-Kumar},\cite{Brion-Survey}).
\end{rem}

\vspace{0.4cm}

\author{\noindent Valdemar V. Tsanov \\
Institute of Mathematics and Informatics, Bulgarian Academy of Sciences\\
ul. ``Acad. Georgi Bonchev'' 8, 1113 Sofia, Bulgaria.\\
Email: valdemar.tsanov@math.bas.bg}

\end{document}